\documentclass[12pt]{amsart}
\usepackage{amssymb,latexsym}
\usepackage{enumerate}
\usepackage{latexsym}
\usepackage{amsthm,amsfonts,amssymb,mathrsfs}
\usepackage{rotating}
\usepackage[leqno]{amsmath}
\usepackage{xspace}
\usepackage[all]{xy}
\usepackage{longtable}

\textwidth=13.5cm
\textheight=23cm
\parindent=16pt
\oddsidemargin=-0.5cm
\evensidemargin=-0.5cm
\topmargin=-0.5cm

\newtheorem{theorem}{Theorem}[section]
\newtheorem{lemma}[theorem]{Lemma}
\newtheorem{proposition}[theorem]{Proposition}
\newtheorem{corollary}[theorem]{Corollary}

\theoremstyle{definition}
\newtheorem{definition}[theorem]{Definition}
\theoremstyle{remark}

\newtheorem{example}[theorem]{Example}

\newcommand{\tr}{\operatorname{tr}}

\newcommand{\Ker}{\operatorname{Ker}}

\newcommand{\Hom}{\operatorname{Hom}}

\newcommand{\Coker}{\operatorname{Coker}}

\newcommand{\BN}{\Bbb N}

\input xy
\xyoption{all}

\begin{document}

\baselineskip=17pt

\title[The existence of relative pure...]{The Existence of Relative pure Injective Envelopes}

\author[F. Zareh-Khoshchehreh and K. Divaani-Aazar]{Fatemeh Zareh-Khoshchehreh and Kamran Divaani-Aazar (Tehran)}

\address{F. Zareh-Khoshchehreh \\ Department of Mathematics \\ Az-Zahra University \\ 
Vanak Post Code 19834 \\
Tehran, Iran.}
\email{fzarehkh@gmail.com}

\address{K. Divaani-Aazar \\ Department of Mathematics \\ Az-Zahra University \\ Vanak, Post Code 19834 \\ 
Tehran, Iran-and-Institute for Studies in Theoretical Physics and Mathematics \\ P.O. Box 19395-5746 \\ 
Tehran, Iran.}
\email{kdivaani@ipm.ir}

\date{}

\subjclass[2010]{13C05; 13C11.}

\keywords {Auslander transpose; cyclically-presented module; finitely presented module; left global
$\mathcal{S}$-pure injective dimension; left global $\mathcal{S}$-pure projective dimension; right
balanced functor; $\mathcal{S}$-pure flat module; $\mathcal{S}$-pure injective module;
$\mathcal{S}$-pure projective module.\\
The second author was supported by a grant from IPM (No. 91130212)}

\begin{abstract} Let $\mathcal{S}$ be a class of finitely presented $R$-modules such that
$R\in \mathcal{S}$ and $\mathcal{S}$ has a subset $\mathcal{S}^*,$  with the property that
for any $U\in \mathcal{S}$ there is a $U^*\in \mathcal{S}^*$ with $U^*\cong U.$  We show
that the class of $\mathcal{S}$-pure injective $R$-modules is preenveloping. As an application,
we deduce that the left global $\mathcal{S}$-pure projective dimension of $R$ is equal to its
left global $\mathcal{S}$-pure injective dimension. As our main result, we prove that, in fact,
the class of $\mathcal{S}$-pure injective $R$-modules is enveloping.
\end{abstract}

\maketitle

\section{Introduction}

Throughout this paper, $R$ denote a  ring with identity and all modules are assumed to be left and
unitary. The notion of purity plays a substantial role in algebra and model theory. It was introduced
by P.M. Cohn in \cite{C} for left $R$-modules and by J. {\L}o\'{s} in \cite{L} for abelian groups;
see also J.M. Maranda \cite{M}. In 1967, R. Kie{\l}pi\'nski in  \cite{K} has introduced the notion of
relative $\Gamma$-purity and proved that any $R$-module possesses a relative $\Gamma$-pure injective
envelope. Also, he has shown that the relative $\Gamma$-pure injectivity coincides with the relative
$\Gamma$-algebraic compactness. Two years later in \cite{W}, R.B. Warfield has proved that any $R$-module
admits a pure injective envelope and the pure injectivity coincides with the algebraic compactness.
Also, he has introduced a notion of $\mathcal{S}$-purity for any class $\mathcal{S}$ of $R$-modules.
One can check that for an appropriate $\Gamma$, the $\Gamma$-purity and RD-purity coincides.  But, for a
general class $\mathcal{S}$ of finitely presented $R$-modules, the relationship between $\Gamma$-purity and $\mathcal{S}$-purity is ambiguous. For a survey of results on various notions of purity, we refer the
reader to the interesting articles \cite{FSZ}, \cite{G}, \cite{H}-\cite{S2} and \cite{W},  where
among other things the algebraic compactness and pure homological dimensions are discussed.

We call a class $\mathcal{S}$ of $R$-modules \emph{set-presentable} if it has a subset $\mathcal{S}^*,$
with the property that for any $U\in \mathcal{S}$ there is a $U^*\in \mathcal{S}^*$ with $U^*\cong U.$
It is easy to see that any class of finitely presented $R$-modules which is closed under isomorphism is set-presentable. So, the classes of finitely presented $R$-modules,  cyclic cyclically-presented
$R$-modules and cyclically-presented $R$-modules are set-presentable. Also, note that each of these
classes contains $R$. Let $\mathcal{S}$ be a set-presentable class of finitely presented $R$-modules
containing $R$. Warfield \cite[Proposition 1]{W} has shown that every $R$-module possesses an
$\mathcal{S}$-pure projective precover. It is natural to ask whether any $R$-module possesses an
$\mathcal{S}$-pure injective preenvelope. For the set-presentable classes of finitely presented
$R$-modules, cyclic cyclically-presented $R$-modules and cyclically-presented $R$-modules, even more
is proven to be true. Warfield \cite[Proposition 6]{W} has proved that every $R$-module possesses a pure
injective envelope. Also, he has shown that every $R$-module has an RD-pure injective envelope;
see e.g. \cite[Chapter XIII, Theorem 1.6]{FS}. More recently, Divaani-Aazar, Esmkhani and
Tousi \cite[Corollary 4.7, Definition 4.8 and Theorem 4.10]{DET} showed that every $R$-module
has a cyclically pure injective envelope.

Our main aim in this paper is to prove that for any set-presentable class $\mathcal{S}$ of finitely
presented $R$-modules containing $R$, the class of  $\mathcal{S}$-pure injective $R$-modules is
enveloping. We essentially use the technique and ideas introduced by Kie{\l}pi\'nski \cite{K} and
Warfield \cite{W} and developed in \cite{EJ}, \cite{FSZ}, \cite{JS}, \cite{DET} and \cite{KS}-\cite{S2}.

First in Proposition \ref{24}, for a general class $\mathcal{S}$ of finitely presented $R$-modules, we
give a characterization of $\mathcal{S}$-pure exact sequences. Let $\mathcal{S}$ be a set-presentable
class of finitely presented $R$-modules containing $R$. In Proposition  \ref{28}, we show that the class of $\mathcal{S}$-pure injective $R$-modules is preenveloping.  This, in particular, yields that the left global $\mathcal{S}$-pure projective dimension of $R$ is equal to its left global $\mathcal{S}$-pure injective
dimension; see Corollary \ref{29}. Finally, in Theorem \ref{38}, we prove that every $R$-module has an
$\mathcal{S}$-pure injective envelope.

We continue the introduction by recalling some basic definitions and notions that we use
in this paper. Let $\mathcal{S}$ be a class of $R$-modules. An exact sequence $0\to A\overset{f}
\to B\overset{g}\to C\to 0$ of $R$-modules and $R$-homomorphisms is called $\mathcal{S}$-{\em pure}
if for all $U\in \mathcal{S}$ the induced homomorphism $\Hom_{R}(U,B)\to \Hom_{R}(U,C)$
is surjective. In this situation, $f, g, f(A)$ and $C$ are called $\mathcal{S}$-{\em pure
monomorphism}, $\mathcal{S}$-{\em pure epimorphism}, $\mathcal{S}$-{\em pure submodule} of $B$,
and $\mathcal{S}$-{\em pure homomorphic image} of $B$; respectively.  An $R$-module $P$ (resp.
$E$) is called $\mathcal{S}$-{\em pure projective} (resp. $\mathcal{S}$-{\em pure injective})
if for any $\mathcal{S}$-pure exact sequence $0\to A\overset{f}\to B\overset{g}\to C\to 0$, the
induced homomorphism $\Hom_{R}(P,B)\to \Hom_{R}(P,C)$ (resp. $\Hom_{R}(B,E)\to \Hom_{R}(A,E)$)
is surjective. Also, a right $R$-module $F$ is called $\mathcal{S}$-{\em pure flat} if for any
$\mathcal{S}$-pure exact sequence $0\to A\overset{f}\to B\overset{g}\to C\to 0$, the induced
homomorphism $F\otimes_{R} A\to F\otimes_{R}B$ is injective. An $R$-module $M$ is called
{\em cyclically-presented} if it is isomorphic to a module of the form $R^{n}/G$ for some
$n\in \BN$ and some cyclic submodule $G$ of $R^n$. If $\mathcal{S}$ is the class of all finitely
presented (resp. cyclic cyclically-presented) $R$-modules, then $\mathcal{S}$-purity is called
{\em purity} (resp. RD-{\em purity}). If $\mathcal{S}$ is the class of all cyclically-presented
$R$-modules, then $\mathcal{S}$-purity is called {\em cyclically purity}.

Let $\mathscr{X}$ be a class of $R$-modules and $M$ an $R$-module. An $R$-homomorphism $\phi:M\to
X$ where $X\in \mathscr{X}$ is called a $\mathscr{X}$-\emph{preenvelope} of $M$ if for any $X'\in
\mathscr{X}$, the induced homomorphism $\Hom_R(X,X')\to \Hom_R(M,X')$ is surjective. Also, an
$R$-homomorphism $\phi:X\to M$ where $X\in \mathscr{X}$ is called a $\mathscr{X}$-\emph{precover}
of $M$ if for any $X'\in \mathscr{X}$, the induced homomorphism $\Hom_R(X',X)\to \Hom_R(X',M)$
is surjective. If $\phi:M\to X$ (resp. $\phi:X\to M$) is a $\mathscr{X}$-preenvelope (resp.
$\mathscr{X}$-precover) of $M$ and any $R$-homomorphism $f:X\to X$ such that $f \phi=\phi$ (resp.
$\phi f=\phi$) is an automorphism, then $\phi$ is called a $\mathscr{X}$-\emph{envelope} (resp. $\mathscr{X}$-\emph{cover}) of $M$. The class $\mathscr{X}$ is called {\em (pre)enveloping} (resp.
{\em (pre)covering}) if every $R$-module admits a $\mathscr{X}$-(pre)envelope (resp.
$\mathscr{X}$-(pre)cover). By definition, it is clear that if $\mathscr{X}$-envelopes (resp.
$\mathscr{X}$-covers) exist, then they are unique up to isomorphism. Also, it is obvious that if
the class $\mathscr{X}$ contains all injective (resp. projective) $R$-modules, then any
$\mathscr{X}$-preenvelope (resp. $\mathscr{X}$-precover) is injective (resp. surjective).

\section{$\mathcal{S}$-pure exact sequences}

Propositions 2.4 and 2.8 are the main results of this section. We will use them several times
for proving our main result in the next section. One can easily deduce the following result by
the definition.

\begin{lemma}\label{21} Let $\mathcal{S}$ be a class of $R$-modules and $\{M_{\gamma}\}_{\gamma
\in \Gamma}$ a set indexed family of $R$-modules. Also, let $\{N_{\gamma}\}_{\gamma\in \Gamma}$
be a set indexed family of right $R$-modules.
\begin{enumerate}
\item[(i)] $\prod_{\gamma\in \Gamma}M_{\gamma}$ is $\mathcal{S}$-pure injective if and only if
$M_{\gamma}$ is $\mathcal{S}$-pure injective for all $\gamma\in \Gamma$.
\item[(ii)] $\bigoplus_{\gamma\in \Gamma}N_{\gamma}$ is $\mathcal{S}$-pure flat if and only if
$N_{\gamma}$ is $\mathcal{S}$-pure flat for all $\gamma\in \Gamma$.
\end{enumerate}
\end{lemma}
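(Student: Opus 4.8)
The plan is to prove Lemma \ref{21} directly from the definitions, splitting into the two parts, and in each part proving the two implications by ``juggling'' $\Hom$ and $\otimes$ past the product (respectively direct sum).

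\medskip

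For part (i), fix an arbitrary $\mathcal{S}$-pure exact sequence $0\to A\xrightarrow{f} B\xrightarrow{g} C\to 0$. The crucial observation is that $\Hom_R$ carries products in the second variable to products, i.e. for every $R$-module $X$ there is a natural isomorphism $\Hom_R\bigl(X,\prod_{\gamma\in\Gamma}M_\gamma\bigr)\cong\prod_{\gamma\in\Gamma}\Hom_R(X,M_\gamma)$, and this isomorphism is compatible with the maps induced by $f$. Applying this with $X=A$ and $X=B$, the induced map $\Hom_R\bigl(B,\prod_\gamma M_\gamma\bigr)\to\Hom_R\bigl(A,\prod_\gamma M_\gamma\bigr)$ is, up to these isomorphisms, exactly the product over $\gamma$ of the maps $\Hom_R(B,M_\gamma)\to\Hom_R(A,M_\gamma)$. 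Since a product of maps is surjective if and only if each factor is surjective, we get: $\prod_\gamma M_\gamma$ is $\mathcal{S}$-pure injective iff each induced map on each factor is surjective for every $\mathcal{S}$-pure exact sequence, which is precisely the statement that each $M_\gamma$ is $\mathcal{S}$-pure injective. (For the ``only if'' direction one also uses that the canonical projections and injections $M_\gamma\leftrightarrows\prod_\gamma M_\gamma$ are split, so $\mathcal{S}$-pure injectivity passes to direct summands; alternatively the product-of-maps description already gives both directions at once.)

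\medskip

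Part (ii) is entirely dual: here one uses that $-\otimes_R A$ commutes with arbitrary direct sums in the first variable, i.e. $\bigl(\bigoplus_{\gamma\in\Gamma}N_\gamma\bigr)\otimes_R A\cong\bigoplus_{\gamma\in\Gamma}(N_\gamma\otimes_R A)$ naturally, and likewise with $B$ in place of $A$; this identifies the map $\bigl(\bigoplus_\gamma N_\gamma\bigr)\otimes_R A\to\bigl(\bigoplus_\gamma N_\gamma\bigr)\otimes_R B$ with the direct sum $\bigoplus_\gamma\bigl(N_\gamma\otimes_R A\to N_\gamma\otimes_R B\bigr)$. A direct sum of $R$-homomorphisms is injective if and only if every summand is injective, so $\bigoplus_\gamma N_\gamma$ is $\mathcal{S}$-pure flat iff each $N_\gamma\otimes_R A\to N_\gamma\otimes_R B$ is injective for every $\mathcal{S}$-pure exact sequence, i.e. iff each $N_\gamma$ is $\mathcal{S}$-pure flat.

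\medskip

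There is no real obstacle here: the only thing to be careful about is the naturality of the standard $\Hom$-product and $\otimes$-coproduct isomorphisms, so that they genuinely intertwine the maps induced by $f$ (resp. by $f$ via $-\otimes_R f$), and the elementary fact that surjectivity of a product of maps, and injectivity of a coproduct of maps, are tested componentwise. Both of these are routine, which is why the statement is recorded as something the reader ``can easily deduce by the definition.''
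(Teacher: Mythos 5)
Your argument is correct and is exactly the routine verification the paper has in mind when it says the lemma ``can easily be deduced by the definition'': identify the induced map on the product (resp.\ direct sum) with the product (resp.\ direct sum) of the componentwise induced maps via the natural isomorphisms $\Hom_R(X,\prod_\gamma M_\gamma)\cong\prod_\gamma\Hom_R(X,M_\gamma)$ and $(\bigoplus_\gamma N_\gamma)\otimes_R X\cong\bigoplus_\gamma(N_\gamma\otimes_R X)$, and test surjectivity (resp.\ injectivity) componentwise. Nothing further is needed.
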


In what follows we denote the Pontryagin duality functor $\Hom_{\mathbb{Z}}(-,\mathbb{Q}/\mathbb{Z})$
by $(-)^{+}$.

\begin{lemma}\label{22} Let $\mathcal{S}$ be a class of $R$-modules. A right $R$-module $M$ is
$\mathcal{S} $-pure flat if and only if $M^{+}$ is $\mathcal{S}$-pure injective.
\end{lemma}

\begin{proof} Let $\mathbf{X}=0\to X_{1}\to X_{2}\to X_{3}\to 0$ be an $\mathcal{S}$-pure exact
sequence. As $\mathbb{Q}/\mathbb{Z}$ is a faithful injective $\mathbb{Z}$-module, $M\otimes_{R}
\mathbf{X}$ is exact if and only if $(M\otimes_{R}\mathbf{X} )^{+}\cong \Hom_{R}(\mathbf{X},M^{+})$
is exact. This implies the conclusion.
\end{proof}

Next, for any general class $\mathcal{S}$ of $R$-modules, we show that the class of $\mathcal{S}$-pure
flat $R$-modules is covering.

\begin{corollary}\label{23} Let $\mathcal{S}$ be a class of $R$-modules. Then every right $R$-module
admits an $\mathcal{S}$-pure flat cover.
\end{corollary}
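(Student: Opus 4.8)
The plan is to derive this as a consequence of Lemma~\ref{22} together with the general theory of cotorsion-type pairs / Enochs' theorem that a precovering class closed under direct limits is covering. First I would establish that the class $\mathcal{F}_{\mathcal{S}}$ of $\mathcal{S}$-pure flat right $R$-modules is \emph{precovering}. The natural route is duality: by Lemma~\ref{22} a right module $M$ is $\mathcal{S}$-pure flat iff $M^{+}$ is $\mathcal{S}$-pure injective, and more generally one shows that a direct-sum (or direct-limit) decomposition behaviour of $\mathcal{S}$-pure flats mirrors that of $\mathcal{S}$-pure injectives under $(-)^{+}$. Concretely, given a right module $M$, form the character module $M^{+}$, take any $\mathcal{S}$-pure injective preenvelope-type map $M^{+}\to E$ (or, more elementarily, embed $M^{+}$ into a product of modules from a generating set and use Lemma~\ref{21}(i) to see that product is $\mathcal{S}$-pure injective), apply $(-)^{+}$ again to get $E^{+}\to M^{++}$, and precompose with the canonical pure monomorphism $M\to M^{++}$; since $E^{+}$ is a direct summand of a product of the form $(\bigoplus \text{copies})^{+}$, Lemma~\ref{21}(ii) and Lemma~\ref{22} guarantee $E^{+}$ is $\mathcal{S}$-pure flat, and a standard adjunction/duality argument shows the composite $M\to E^{+}$ is an $\mathcal{F}_{\mathcal{S}}$-preenvelope turned around, hence (after the usual solution-set cardinality bookkeeping) one extracts an $\mathcal{F}_{\mathcal{S}}$-precover.

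Next I would verify the two closure properties needed to upgrade ``precovering'' to ``covering'': that $\mathcal{F}_{\mathcal{S}}$ is closed under arbitrary direct sums and under direct limits. Closure under direct sums is exactly Lemma~\ref{21}(ii). Closure under direct limits follows because the functor $\otimes_{R}$ commutes with direct limits and a direct limit of injections in the relevant sense stays injective after tensoring with a direct limit; alternatively one dualizes: $(\varinjlim M_{i})^{+}\cong \varprojlim (M_{i}^{+})$, an inverse limit of $\mathcal{S}$-pure injectives is $\mathcal{S}$-pure injective by Lemma~\ref{21}(i) (being a submodule-closed, product-closed situation after checking the $\Hom$-functors behave), and Lemma~\ref{22} then returns the $\mathcal{S}$-pure flatness. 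I would then invoke the Enochs existence theorem: a class of modules that is precovering and closed under direct limits is covering. That yields the statement.

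The main obstacle I anticipate is the precovering step, specifically making the duality argument produce an honest \emph{set}-indexed solution set rather than merely a single (large) map. The clean fix is to bound cardinalities: every $\mathcal{S}$-pure flat module is a direct limit of finitely presented modules lying in a class controlled by $\mathcal{S}$ (here set-presentability is not even needed since $\mathcal{S}$ consists of arbitrary modules, but a Lazard-type ``local'' description of $\mathcal{S}$-pure flatness via finitely presented approximants does the job), so for a fixed $M$ the relevant maps factor through $\mathcal{S}$-pure flat modules of cardinality bounded in terms of $|M|$ and $|R|$; this gives the solution set and hence, by the general homological criterion, the precover. Once precovers exist and the class is closed under direct limits, Enochs' theorem does the rest with no further work.
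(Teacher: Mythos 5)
Your overall architecture (establish precovering, check closure under direct limits, invoke Enochs' theorem that a precovering class closed under direct limits is covering) is a legitimate route, and your closure-under-direct-limits argument via $\otimes_R$ commuting with direct limits is fine. But the step that actually carries the weight --- producing an $\mathcal{S}$-pure flat \emph{precover} --- is not established by what you describe. Your duality construction starts from $M^{+}$, builds an $\mathcal{S}$-pure injective (pre)envelope $M^{+}\to E$, and dualizes to $E^{+}\to M^{++}$; composing the other way with $M\to M^{++}$ gives a map \emph{out of} $M$ into an $\mathcal{S}$-pure flat module. That is a preenvelope-direction morphism and cannot be ``turned around'' into a precover by adjunction; the two notions are not dual to each other inside a single module category. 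Your proposed fix, a Lazard-type description of $\mathcal{S}$-pure flat modules as direct limits of finitely presented modules controlled by $\mathcal{S}$, is not available here: the corollary allows $\mathcal{S}$ to be an arbitrary class of (left) $R$-modules, and no such structure theorem is proved or cited in the paper. Separately, your ``alternative'' argument that an inverse limit of $\mathcal{S}$-pure injectives is $\mathcal{S}$-pure injective is false as stated; Lemma \ref{21}(i) covers products, not inverse limits (which are only submodules of products).

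The paper's proof avoids all of this. It shows two closure properties of the class of $\mathcal{S}$-pure flat right modules: closure under set-indexed direct sums (Lemma \ref{21}(ii)) and closure under \emph{pure quotients} --- the latter because a pure exact sequence $0\to N\to M\to L\to 0$ dualizes to a split sequence $0\to L^{+}\to M^{+}\to N^{+}\to 0$, so $L^{+}$ is a direct summand of the $\mathcal{S}$-pure injective module $M^{+}$ (Lemma \ref{22}), hence $L$ is $\mathcal{S}$-pure flat. It then cites Holm--J{\o}rgensen \cite[Theorem 2.5]{HJ}, which says precisely that a class closed under pure quotients and direct sums is covering. If you want to salvage your route, the missing ingredient for the solution-set/precover step is closure under \emph{pure submodules} (proved the same way, since $N^{+}$ is also a summand of $M^{+}$) together with the fact that every element of a module lies in a pure submodule of cardinality at most $|R|+\aleph_{0}$; but at that point you are essentially reproving the Holm--J{\o}rgensen criterion.
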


\begin{proof} Let $M$ be a right $\mathcal{S}$-pure flat $R$-module and $0\to N\overset{f}\to M
\overset{g}\to L\to 0$ a pure exact sequence of right $R$-modules. Then, we get the split exact
sequence $0\to L^+\overset{g^+}\to M^+\overset{f^+}\to N^+\to 0$. Lemma 2.2 implies that $M^+$
is $\mathcal{S}$-pure injective, and so by Lemma 2.1 (i), one deduce that $L^+$ is
$\mathcal{S}$-pure injective. So, using Lemma 2.2 again, yields that $L$ is $\mathcal{S}$-pure
flat. Hence, the class of $\mathcal{S}$-pure flat right $R$-modules is closed under pure quotient
modules. On the other hand, by Lemma 2.1 (ii), any direct sum of $\mathcal{S}$-pure flat right
$R$-modules is $\mathcal{S}$-pure flat. Therefore, by \cite[Theorem 2.5]{HJ}, it turns out that
every right $R$-module has an $\mathcal{S}$-pure flat cover.
\end{proof}

For any two natural integers $n,k$ and any $R$-homomorphism $\mu:R^{k}\to R^{n},$ let $\mu^{t}:R^{n}
\to R^{k}$ denote the $R$-homomorphism given by the transpose of the matrix corresponding to $\mu.$
Let $U$ be a finitely presented $R$-module and $R^{k}\overset{\mu}\to R^{n}\overset{\pi}\to U\to 0$
a finitely presentation of $U$. Then, the {\em Auslander transpose} of $U$ is defined by $\tr(U):=
\Coker \mu^{t}.$ It is unique up to projective direct summands. For further information on the notion
Auslander transpose, we refer the reader to \cite[Section 11.4]{S3} and in particular to Remark in
\cite[page 185]{S3}.

The following result is an analogue of \cite[Proposition 3]{W} for a general class of finitely presented
$R$-modules; see also \cite[Lemma 1 and Theorem 1]{K}, \cite[Proposition 1.1 and Corollary 1.2]{JS}
and \cite[Lemma 4.2]{S2}.

\begin{proposition}\label{24} Let $\mathcal{S}$ be a class of finitely presented $R$-modules and
$\mathbf{E}=0\to A \hookrightarrow \hspace{-4mm}^{\vspace{-1mm}^{i}}\hspace{3mm}B\to\hspace{-5mm}^{\vspace{-1mm}^{\psi}}\hspace{3mm}C\to
0$ an exact sequence of $R$-modules and $R$-homomorphisms. The following are equivalent:
\begin{enumerate}
\item[(i)] $\mathbf{E}$ is $\mathcal{S}$-pure exact.
\item[(ii)] $\tr(U)\otimes_{R}\mathbf{E}$ is exact for all $U\in \mathcal{S}.$
\item[(iii)] $\mu(A^{k})=A^{n}\cap\mu(B^{k})$ for all matrices $\mu \in \Hom_{R}(R^{k},R^{n})$ with
$\Coker \mu^{t} \in \mathcal{S}$.
\item[(iv)] for any matrix $(r_{ij})\in \Hom_{R}(R^{n},R^{k})$ with $\Coker (r_{ij})\in \mathcal{S}$
and any $a_1,\ldots, a_n\in A$ if the linear equations $\sum^{k}_{i=1} r_{ij}x_{i}=a_{j}; 1\leqslant
j\leqslant n$ are soluble in $B$, then they are also soluble in $A$.
\end{enumerate}
\end{proposition}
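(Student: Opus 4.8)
The plan is to prove the cycle of implications $(i)\Rightarrow(ii)\Rightarrow(iii)\Rightarrow(iv)\Rightarrow(i)$, tying together the tensor-product description, the matrix/submodule description, and the solvability-of-equations description. The conceptual heart is the identity, valid for any $R$-module $N$ and any $\mu\colon R^k\to R^n$ with cokernel $U$, that $U\otimes_R N$ is naturally isomorphic to $N^n/\mu^t(N^k)$ where I use the presentation $R^k\xrightarrow{\mu^t}R^n\to \tr(U)\to 0$ of the Auslander transpose; more precisely, since $\tr(U)=\Coker\mu^t$ arises from the dual presentation, right-exactness of $-\otimes_R N$ gives $\tr(U)\otimes_R N\cong N^n/\mu^t(N^k)$ (here I deliberately keep careful track of which of $\mu,\mu^t$ acts on which free module). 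So the first step is to record this natural identification, together with the fact that a finite presentation $R^k\xrightarrow{\mu}R^n\xrightarrow{\pi}U\to 0$ of $U\in\mathcal S$ dualizes to a presentation whose cokernel is $\tr(U)$, and that every such $U$ is, up to projective summands in $\tr(U)$ (which do not affect exactness of $\tr(U)\otimes_R\mathbf E$), captured by some matrix $\mu$.

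For $(i)\Rightarrow(ii)$: apply $\Hom_R(U,-)$ to $\mathbf E$; $\mathcal S$-purity says $\Hom_R(U,B)\to\Hom_R(U,C)$ is onto, equivalently $\Ext^1_R(U,A)\to\Ext^1_R(U,B)$ is injective. Then I relate $\tr(U)\otimes_R\mathbf E$ to $\Hom_R(U,\mathbf E)$ via the standard connecting isomorphism $\Tor_1^R(\tr(U),N)\cong \overline{\Hom_R(U,N)}$ (the stable Hom, i.e. Hom modulo maps factoring through a projective) — this is exactly the content of the "right balanced functor" circle of ideas cited in the paper, and it shows $\tr(U)\otimes_R\mathbf E$ is exact precisely when $\Hom_R(U,B)\to\Hom_R(U,C)$ is surjective, giving $(i)\Leftrightarrow(ii)$ in one stroke. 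For $(ii)\Rightarrow(iii)$: using the identification $\tr(U)\otimes_R N\cong N^n/\mu^t(N^k)$, exactness of $\tr(U)\otimes_R\mathbf E$ at the $A$-spot is the statement $\Ker\big(A^n/\mu^t(A^k)\to B^n/\mu^t(B^k)\big)=0$, i.e. $\mu^t(A^k)=A^n\cap\mu^t(B^k)$; since $\Coker\mu^t=\tr(U)$ ranges over $\mathcal S$ exactly as $\Coker(\text{the dual matrix})$ does, relabeling the matrix gives condition (iii) as stated. For $(iii)\Rightarrow(iv)$: given $(r_{ij})\in\Hom_R(R^n,R^k)$ with cokernel in $\mathcal S$ and $a_1,\dots,a_n\in A$, the system $\sum_i r_{ij}x_i=a_j$ being solvable in $B$ says exactly that $(a_1,\dots,a_n)\in A^n\cap (r_{ij})^t(B^k)$ (reading $(r_{ij})^t\colon R^k\to R^n$), so by (iii) it lies in $(r_{ij})^t(A^k)$, i.e. the system is solvable in $A$. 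For $(iv)\Rightarrow(i)$: let $U\in\mathcal S$ with presentation $R^k\xrightarrow{(r_{ij})}R^n\xrightarrow{\pi}U\to 0$, and let $h\colon U\to C$; lift the images $\pi(e_1),\dots,\pi(e_n)$ to $b_1,\dots,b_n\in B$ via the surjection $\psi\colon B\to C$, so that $\sum_i r_{ij}\,h(\pi(e_i))=0$ in $C$ means $\sum_i r_{ij}b_i=:a_j\in A$ for each $j$; then the system $\sum_i r_{ij}x_i=a_j$ is solvable in $B$ (by the $b_i$), hence by (iv) solvable in $A$ by some $a_i'$, and $b_i-a_i'$ then define a well-defined lift $U\to B$ of $h$, proving surjectivity of $\Hom_R(U,B)\to\Hom_R(U,C)$.

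The step I expect to be the main obstacle is pinning down $(i)\Leftrightarrow(ii)$ with complete precision, i.e. making the isomorphism $\Tor_1^R(\tr U,N)\cong\underline{\Hom}_R(U,N)$ natural in $\mathbf E$ and checking that the projective-summand ambiguity in $\tr(U)$ is genuinely harmless for the exactness assertion; everything else is bookkeeping with free presentations. An alternative, more self-contained route avoids $\Tor$ entirely: prove $(i)\Rightarrow(iv)$ directly by the lifting argument just sketched (it needs only the definition of $\mathcal S$-purity), then $(iv)\Rightarrow(iii)\Rightarrow(ii)$ via the free-module identifications, and finally $(ii)\Rightarrow(i)$ by running the lifting argument in reverse; I would likely present it this way to keep the proof elementary, relegating the $\tr$/Tor interpretation to a remark.
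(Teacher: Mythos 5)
Your proposal is correct, and it really contains two proofs. The ``alternative, more self-contained route'' you say you would actually write up --- (i)$\Leftrightarrow$(iv) by the lifting-and-correction argument, (ii)$\Leftrightarrow$(iii) by tensoring the free presentation $R^k\xrightarrow{\mu}R^n\to\tr(U)\to 0$ with $A$ and with $B$ and computing the kernel, and (iii)$\Leftrightarrow$(iv) as a dictionary between image conditions and solvability of linear systems --- is exactly the paper's proof; the only thing to settle there is the index convention, which the paper fixes by taking $\mu\in\Hom_R(R^k,R^n)$ with $U=\Coker\mu^t\in\mathcal S$, so that $\tr(U)=\Coker\mu$ and $\tr(U)\otimes_RN\cong N^n/\mu(N^k)$. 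Your primary route differs in one link: you close (i)$\Leftrightarrow$(ii) directly via the natural isomorphism $\Tor_1^R(\tr(U),N)\cong\underline{\Hom}_R(U,N)$, whereas the paper never connects (i) and (ii) directly. Your shortcut is sound: the long exact Tor sequence converts exactness of $\tr(U)\otimes_R\mathbf E$ into surjectivity of $\Tor_1^R(\tr(U),B)\to\Tor_1^R(\tr(U),C)$, hence of $\underline{\Hom}_R(U,B)\to\underline{\Hom}_R(U,C)$, and since $\psi$ is an epimorphism every map $U\to C$ factoring through a projective lifts to $B$, so the stable and ordinary Hom maps have the same cokernel --- exactly the point you flagged, and it does go through, as does the harmlessness of the projective-summand ambiguity in $\tr(U)$ (projectives are flat, so $P\otimes_R\mathbf E$ stays exact). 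What the Tor route buys is conceptual economy: one isomorphism explains why the Auslander transpose appears at all. What the paper's elementary route buys is independence from that machinery and, more importantly, direct access to the equational form (iv), which is the version actually reused in the later transfinite arguments (Lemmas 3.3 and 3.5).
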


\begin{proof} (i)$\Rightarrow$(iv) Let $(r_{ij})\in \Hom_{R}(R^{n},R^{k})$ be a matrix with $\Coker (r_{ij})
\in \mathcal{S}$ and $a_1,\ldots, a_n\in A$. Set $U:=\Coker (r_{ij})$. Then $U$ has generators $u_1,\ldots,
u_k$ which satisfy the relations $\sum^{k}_{i=1} r_{ij}u_{i}=0;$  $1\leqslant j\leqslant n$. Assume that
the linear equations $$\sum^{k}_{i=1}r_{ij}x_{i}=a_{j};  \    \   1\leqslant j\leqslant n$$ are soluble in
$B$. We show that they are also soluble in $A$. Let $y_{1},\ldots, y_{k}\in B$ be a solution of these
equations. The map $f\in \Hom_{R}(U,C)$ given by $f(u_{i}):=\psi(y_{i})$ for all $1\leqslant i\leqslant k$
is a well-defined $R$-homomorphism. As $\mathbf{E}$ is $\mathcal{S}$-pure exact, the induced homomorphism $\Hom_{R}(U,B)\to \Hom_{R}(U,C)$ is surjective, and so there exists an $R$-homomorphism
$g\in \Hom_{R}(U,B)$ such that $f=\psi g$. Let $z_{i}:=y_{i}-g(u_{i})$ for all $i=1,\ldots, k.$
Then each $z_{i}$ belongs to $\Ker \psi=A$ and $\sum^{k}_{i=1}r_{ij}z_{i}=a_{j}$ for all $j=1,\ldots, n.$

(iv)$\Rightarrow$(i) Let $U$ be an element of $\mathcal{S}$ which is generated by elements $u_{1},\ldots,
u_{k}$ which satisfy the relations $\sum^{k}_{i=1}r_{ij}u_{i}=0;  1\leqslant j\leqslant n$. Let $f\in
\Hom_{R}(U,C)$. For each $i=1,\ldots, k,$ choose $y_{i}\in B$ such that $\psi(y_{i})=f(u_{i}).$ Then
$\sum^{k}_{i=1}r_{ij}y_{i}\in \Ker \psi=A$ for all $j=1,\ldots, n.$ Therefore, we have a set of linear
equations: $$\sum^{k}_{i=1}r_{ij}x_{i}=a_{j}; \   \   1\leqslant j\leqslant n$$ with constants
in $A$ which are soluble in $B$. Let $z_{1},\ldots, z_{k}$ be a solution of these equations in $A$. We
define $g\in \Hom_{R}(U,B)$ by $g(u_{i}):=y_{i}-z_{i}$ for all $i=1,\ldots, k.$ Then $\psi g=f,$ and so
the induced homomorphism $\Hom_{R}(U,B)\to \Hom_{R}(U,C)$ is surjective.

(ii)$\Leftrightarrow$(iii) Let $\mu=(r_{ij})\in \Hom_{R}(R^{k},R^{n})$ be a matrix with
$U:=\Coker \mu^t\in \mathcal{S}.$ Tensoring the exact sequence $R^k\overset{\mu}\to R^n
\overset{\pi}\to \tr(U)\to 0,$ by $A$ first and then by $B$ yield the following commutative diagram:
$$\xymatrix{A^{k}\ar[d]_{i^k}\ar[r]^{\mu}&A^{n}\ar[d]_{i^n}\ar[r]^{\hspace{-4mm}\pi_{A}}
& \tr(U)\otimes_{R}A \ar[d]^{1_{\tr(U)}\underset{R}\otimes i}\ar[r]&0 \\
B^k\ar[r]_{\mu}&B^n\ar[r]_{\hspace{-4mm}\pi_{B}}&\tr(U)\otimes_{R}B\ar[r]&0 }$$\\
in which all maps are natural, rows are exact and the left and middle vertical maps are injective. Clearly, $1_{\tr(U)}\otimes_{R}i$ is injective if and only if $\Ker \pi_{A}=\Ker((1_{\tr(U)}\otimes_{R}i)(\pi_{A}))$.
On the other hand, we have:
$$\begin{array}{llll}
\Ker((1_{\tr(U)}\otimes_{R}i)(\pi_{A}))&=\Ker (\pi_{B}i^n)\\
&=A^{n}\cap \Ker \pi_{B}\\
&=A^{n}\cap \mu(B^{k}).
\end{array}$$
Therefore, $\tr(U)\otimes_{R}\mathbf{E}$ is exact if and only if $\mu(A^{k})=A^{n}\cap\mu(B^{k}).$

(iii)$\Rightarrow$(iv) Assume that $(r_{ij})\in \Hom_{R}(R^{n},R^{k})$ be a matrix with $\Coker
(r_{ij})\in \mathcal{S}$. Consider the linear equations: $$\sum^{k}_{i=1}r_{ij}x_{i}=a_{j};  \   \
1\leqslant j\leqslant n$$ with constants in $A$. Let $b_{1},\ldots, b_{k}$ be a solution of these
equations in $B$.  Set $\mu:=(r_{ij})^t$. Then, the hypothesis yields that $\mu(A^{k})=A^{n}\cap
\mu(B^{k}).$ As $(a_{1},\ldots, a_{n})\in A^{n}\cap \mu(B^{k})$, there exists $(\acute{a_{1}},
\ldots, \acute{a_{k}})\in A^{k}$ such that $\mu((\acute{a_{1}},\ldots, \acute{a_{k}}))=(a_{1},\ldots,
a_{n})$. Consequently, $\acute{a_{1}},\ldots, \acute{a_{k}}$ is a solution of the above equations in
$A$.

(iv)$\Rightarrow$(iii) Let $\mu=(r_{ij})\in \Hom_{R}(R^{k},R^{n})$ be a matrix with $U:=\Coker \mu^{t}
\in \mathcal{S}$. Let $(a_{1},\ldots,a_{n})\in A^{n}\cap \mu(B^{k})$. Then $\mu(b_{1},\ldots,b_{k})=
(a_{1},\ldots, a_{n})$ for some $b_{1},\ldots, b_{k}\in B$. Hence, $b_{1},\ldots, b_{k}$ is a solution
of the equations $$\sum^{k}_{i=1}r_{ji}x_{i}=a_{j}; \   \   1\leqslant j\leqslant n.$$  Let $\acute{a_{1}},
\ldots, \acute{a_{k}}\in A$ be a solution of the above equations. Then $\mu(\acute{a_{1}},\ldots,\acute{a_{k}})=(a_{1},\ldots, a_{n}),$ and so $(a_{1},\ldots,a_{n})\in
\mu(A^{k})$.
\end{proof}

Now, we deduce a couple of corollaries of Proposition \ref{24}.

\begin{corollary}\label{25} Let $\mathcal{S}$ be a class of finitely presented $R$-modules and $\mathbf{X}
=0\to X_{1}\to X_{2}\to X_{3}\to 0$ an exact sequence of $R$-modules and $R$-homomorphisms. Then the
following conditions are equivalent:
\begin{enumerate}
\item[(i)] $\mathbf{X}$ is $\mathcal{S}$-pure exact.
\item[(ii)] $\Hom_{R}(P,\mathbf{X})$ is exact for all $\mathcal{S}$-pure projective $R$-modules $P.$
\item[(iii)] $\Hom_{R}(\mathbf{X},E)$ is exact for all $\mathcal{S}$-pure injective $R$-modules $E.$
\item[(iv)] $F\otimes_{R} \mathbf{X}$ is exact for all $\mathcal{S}$-pure flat $R$-modules $F.$
\end{enumerate}
\end{corollary}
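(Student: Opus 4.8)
The plan is to prove Corollary \ref{25} by establishing a cycle of implications, using Proposition \ref{24} together with the standard fact that any module is an $\mathcal{S}$-pure epimorphic image of an $\mathcal{S}$-pure projective module (indeed of a direct sum of copies of modules in $\mathcal{S}$, since $R\in\mathcal{S}$ is not assumed here but the free modules still give a surjection and one enlarges by copies of members of $\mathcal{S}$; in any case Warfield \cite{W} guarantees $\mathcal{S}$-pure projective precovers). The implications (ii)$\Rightarrow$(i), (iii)$\Rightarrow$(i), (iv)$\Rightarrow$(i) are each immediate once one observes that every module in $\mathcal{S}$ is $\mathcal{S}$-pure projective (apply the defining condition to the module $U\in\mathcal{S}$ itself), so that (ii) applied with $P=U$ gives back exactly the surjectivity of $\Hom_R(U,X_2)\to\Hom_R(U,X_3)$ which is the definition of $\mathcal{S}$-pure exactness; similarly $\mathcal{S}$-pure injective and $\mathcal{S}$-pure flat modules exist in abundance, but for these directions I instead note that $\Hom_R(-,E)$ or $F\otimes_R-$ exactness for \emph{all} such $E$, $F$ forces the sequence to be $\mathcal{S}$-pure via a dual/flat argument (see below).

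For the forward directions, start from (i). To get (i)$\Rightarrow$(ii): write an arbitrary $\mathcal{S}$-pure projective $P$ as a direct summand of a direct sum of modules each appearing as a cokernel as in Proposition \ref{24}; more cleanly, $P$ is $\mathcal{S}$-pure projective iff it is a direct summand of a direct sum of members of $\mathcal{S}$ (this is the analogue of the classical characterization, and follows from Warfield's precover result \cite[Proposition 1]{W} applied to $P$ and the splitting that $\mathcal{S}$-pure projectivity forces). Since $\Hom_R(-,\mathbf{X})$ commutes with finite direct sums and an arbitrary direct sum maps into a product, and since for each $U\in\mathcal{S}$ the sequence $\Hom_R(U,\mathbf{X})$ is exact by hypothesis (left exactness of $\Hom$ is automatic; surjectivity on the right is precisely $\mathcal{S}$-purity), one concludes $\Hom_R(P,\mathbf{X})$ is exact. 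For (i)$\Rightarrow$(iv): by Proposition \ref{24}(ii), $\tr(U)\otimes_R\mathbf{X}$ is exact for every $U\in\mathcal{S}$; now an $\mathcal{S}$-pure flat module $F$ is, by Corollary \ref{23} and the structure of $\mathcal{S}$-pure flat covers, a direct summand of a module built as a filtered colimit / direct sum involving the $\tr(U)$, so that flatness-type persistence of exactness under direct limits and summands yields exactness of $F\otimes_R\mathbf{X}$. Finally (i)$\Rightarrow$(iii): apply Lemma \ref{22} — given an $\mathcal{S}$-pure injective $E$, it need not literally be some $M^+$, but the Pontryagin dual $\mathbf{X}^+$ of the sequence is $\mathcal{S}$-pure exact of right modules (purity is preserved by $(-)^+$ because $(-)^+$ turns $\Hom_R(U,-)$ surjections into $\Hom_R(U,-)$ injections compatibly with tensor), hence split after applying $\Hom$, and then one reduces exactness of $\Hom_R(\mathbf{X},E)$ to exactness of $\Hom_R(\mathbf{X},E^{++})$ via the canonical pure monomorphism $E\hookrightarrow E^{++}$, together with $E^{++}\cong(E^+)^+$ being $\mathcal{S}$-pure injective by Lemma \ref{22}.

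The main obstacle I anticipate is pinning down the exact structural characterization of $\mathcal{S}$-pure projective and $\mathcal{S}$-pure flat modules — that is, justifying "direct summand of a direct sum of members of $\mathcal{S}$" and the analogous statement for $\mathcal{S}$-pure flats in terms of the $\tr(U)$ — cleanly enough to feed into the direct-sum/direct-limit exactness arguments. If the paper has not yet isolated these characterizations, the honest route is: for (ii) and (iv) use that the class of sequences $\mathbf{X}$ making $\Hom_R(P,-)$ (resp. $P\otimes-$) exact for \emph{all} members of the relevant class is closed under the relevant operations, and that $\mathcal{S}\subseteq\{\mathcal{S}\text{-pure projectives}\}$ and $\{\tr(U):U\in\mathcal{S}\}\subseteq\{\mathcal{S}\text{-pure flats}\}$ (the latter by Proposition \ref{24}(ii) and Corollary \ref{23}'s closure properties) to squeeze the characterizations. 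I expect the write-up to be short: it is essentially "unwind the definitions, apply Proposition \ref{24}, and invoke Lemma \ref{22} and Corollary \ref{23} for the injective and flat clauses."
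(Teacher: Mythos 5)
There is a genuine gap, and it sits exactly at the one implication that needs a real idea. Your cycle requires (iii)$\Rightarrow$(i) (or some implication out of (iii)), but you never actually argue it: you defer to ``a dual/flat argument (see below)'', and the material below only closes the loop for (ii) and (iv). The paper's proof handles this by showing (iii)$\Rightarrow$(iv): for an $\mathcal{S}$-pure flat $F$, Lemma \ref{22} makes $F^{+}$ $\mathcal{S}$-pure injective, so (iii) gives exactness of $\Hom_{R}(\mathbf{X},F^{+})\cong (F\otimes_{R}\mathbf{X})^{+}$, and faithful injectivity of $\mathbb{Q}/\mathbb{Z}$ gives exactness of $F\otimes_{R}\mathbf{X}$; then (iv)$\Rightarrow$(i) follows because each $\tr(U)$, $U\in\mathcal{S}$, is $\mathcal{S}$-pure flat by Proposition \ref{24}, and Proposition \ref{24} converts exactness of the $\tr(U)\otimes_{R}\mathbf{X}$ back into $\mathcal{S}$-purity. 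Some such use of the duality $\Hom_{R}(\mathbf{X},M^{+})\cong(M\otimes_{R}\mathbf{X})^{+}$ is unavoidable for extracting information from (iii), and it is missing from your write-up.

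A second problem is that you have made the trivial directions hard, and in doing so imported hypotheses the corollary does not have. All three forward implications (i)$\Rightarrow$(ii), (iii), (iv) are immediate from the definitions: left-exactness of $\Hom$ and right-exactness of $\otimes$ give everything except one surjectivity (resp.\ injectivity), and that remaining map is exactly the one the definition of $\mathcal{S}$-pure projective (resp.\ injective, flat) asserts to be surjective (resp.\ injective) on the $\mathcal{S}$-pure exact sequence $\mathbf{X}$. Your detours are not only unnecessary but unsound here: the characterization of $\mathcal{S}$-pure projectives as summands of direct sums of members of $\mathcal{S}$ uses $R\in\mathcal{S}$ and set-presentability, neither of which is assumed in this corollary; the claimed description of $\mathcal{S}$-pure flats as summands of colimits built from the $\tr(U)$ is established nowhere in the paper; and your reduction of (i)$\Rightarrow$(iii) to $E^{++}$ invokes Lemma \ref{22} to say $E^{++}$ is $\mathcal{S}$-pure injective, which would require first knowing that $E^{+}$ is $\mathcal{S}$-pure flat --- another unproved claim. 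Replace all three forward directions by the one-line definitional argument, and supply (iii)$\Rightarrow$(iv) via Lemma \ref{22} as above, and the proof is complete.
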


\begin{proof} (i)$\Rightarrow$(ii) and (i)$\Rightarrow$(iii) are clear. (ii)$\Rightarrow$(i) comes from the
fact that every $U\in \mathcal{S}$ is $\mathcal{S}$-pure projective.

(iii)$\Rightarrow$(iv) Let $F$ be an $\mathcal{S}$-pure flat $R$-module. Then, by Lemma \ref{22}, $F^{+}$
is $\mathcal{S}$-pure injective. So, $$\Hom_{R}(\mathbf{X},F^{+})\cong \Hom_{\mathbb{Z}}(F\otimes_{R} \mathbf{X},\mathbb{Q}/\mathbb{Z})=(F\otimes_{R} \mathbf{X})^{+}$$ is exact. Since $\mathbb{Q}/\mathbb{Z}$
is a faithful injective $\mathbb{Z}$-module, it follows that $F\otimes_{R}\mathbf{X}$ is exact.

(iv)$\Rightarrow$(i) By Proposition  \ref{24}, $\tr(U)$ is $\mathcal{S}$-pure flat for all $U\in \mathcal{S}$.
Hence, Proposition  \ref{24} implies that the sequence $\mathbf{X}$ is $\mathcal{S}$-pure exact.
\end{proof}

In what follows, for a class $\mathcal{S}$ of finitely presented $R$-modules, we denote the  class
$\{\tr(U)|U\in \mathcal{S}\}$ by $\tr(\mathcal{S})$.

\begin{corollary}\label{26} Assume that $R$ is commutative and $\mathcal{S}$ is a set-presentable class
of finitely presented $R$-modules containing $R$. If $\mathcal{S}\subseteq \tr(\mathcal{S})$, then every $\mathcal{S}$-pure projective $R$-module is $\mathcal{S}$-pure flat.
\end{corollary}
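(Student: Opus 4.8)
The plan is to combine the standard structure description of $\mathcal{S}$-pure projective modules with the fact — already extracted inside the proof of Corollary \ref{25} — that each Auslander transpose $\tr(U)$ with $U\in\mathcal{S}$ is $\mathcal{S}$-pure flat. Note at the outset that the commutativity of $R$ is used precisely so that the property ``$\mathcal{S}$-pure flat'', which a priori concerns right $R$-modules, applies without ambiguity both to the modules $\tr(U)$ and to $\mathcal{S}$-pure projective (left) modules.

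First I would record that, for every $U\in\mathcal{S}$, the module $\tr(U)$ is $\mathcal{S}$-pure flat: this is exactly the content of the step (iv)$\Rightarrow$(i) in the proof of Corollary \ref{25}, which invokes Proposition \ref{24} to conclude that tensoring an $\mathcal{S}$-pure exact sequence with $\tr(U)$ keeps it exact. Consequently, under the hypothesis $\mathcal{S}\subseteq\tr(\mathcal{S})$, every $U\in\mathcal{S}$ is isomorphic to some $\tr(V)$ with $V\in\mathcal{S}$, hence is $\mathcal{S}$-pure flat.

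Next I would recall the description of $\mathcal{S}$-pure projective modules. Given an $R$-module $M$, form $P:=\bigoplus_{U\in\mathcal{S}^*}\bigoplus_{f\in\Hom_R(U,M)}U$ together with the canonical $R$-homomorphism $\phi:P\to M$. By construction every homomorphism from a module of $\mathcal{S}$ into $M$ factors through $\phi$, so the short exact sequence $0\to\Ker\phi\to P\overset{\phi}\to M\to 0$ is $\mathcal{S}$-pure; this is essentially Warfield \cite[Proposition 1]{W}. If $M$ is $\mathcal{S}$-pure projective, applying $\Hom_R(M,-)$ to this sequence shows that $1_M$ lifts along $\phi$, so the sequence splits and $M$ is a direct summand of $P$, a direct sum of modules of $\mathcal{S}$.

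Finally, by the previous paragraph every summand $U$ occurring in $P$ is $\mathcal{S}$-pure flat, so by Lemma \ref{21}(ii) the module $P$ is $\mathcal{S}$-pure flat; writing $P\cong M\oplus Q$ and applying Lemma \ref{21}(ii) once more to the two-term family $\{M,Q\}$ yields that $M$ is $\mathcal{S}$-pure flat. The only genuinely delicate point is the left/right bookkeeping around the Auslander transpose and the notion of flatness, which is exactly what the commutativity hypothesis settles; the remainder is a routine assembly of Proposition \ref{24}, Corollary \ref{25}, Lemma \ref{21}, and \cite[Proposition 1]{W}.
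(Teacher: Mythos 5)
Your argument is correct and follows essentially the same route as the paper: deduce from Proposition \ref{24} that each $\tr(U)$, and hence (via $\mathcal{S}\subseteq\tr(\mathcal{S})$) each member of $\mathcal{S}$, is $\mathcal{S}$-pure flat, then combine Warfield's description of $\mathcal{S}$-pure projectives as summands of direct sums of modules in $\mathcal{S}$ with Lemma \ref{21}(ii). You merely supply more detail (the construction behind \cite[Proposition 1]{W} and the left/right remark justified by commutativity) than the paper's three-line proof.
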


\begin{proof} Assume that $\mathcal{S}\subseteq \tr(\mathcal{S})$. Then, by Proposition  \ref{24}, any element
of $\mathcal{S}$ is $\mathcal{S}$-pure flat. By \cite[Proposition 1]{W}, an $R$-module $M$ is
$\mathcal{S}$-pure projective if and only if it is a summand of a direct sum of copies of modules
in $\mathcal{S}$. Thus, by Lemma 2.1 (ii), every $\mathcal{S}$-pure projective $R$-module is
$\mathcal{S}$-pure flat.
\end{proof}

\begin{example}\label{27} Let $\mathcal{S}$ be a class of finitely presented $R$-modules.
\begin{enumerate}
\item[(i)] If $\mathcal{S}$ is the class of all cyclic free $R$-modules, then $\mathcal{S}$-pure exact
sequences are the usual exact sequences. So, $\mathcal{S}$-pure projective, $\mathcal{S}$-pure injective and $\mathcal{S}$-pure flat $R$-modules are the usual projective, injective and flat $R$-modules; respectively.
\item[(ii)] If $\mathcal{S}$ is the class of all finitely presented $R$-modules, then
$\mathcal{S}$-purity coincides with the usual purity.
\item[(iii)] If $\mathcal{S}$ is the class of all cyclic cyclically-presented $R$-modules,
then $\mathcal{S}$-purity coincides with the RD-purity.
\item[(iv)] If $\mathcal{S}$ is the class of all cyclically-presented $R$-modules, then
$\mathcal{S}$-purity coincides with the cyclically purity.
\item[(v)] Assume that $R$ is commutative. Obliviously, if $R\in \mathcal{S}$, then $R\in \tr(\mathcal{S})$.
It is easy to see that if $\mathcal{S}$ is set-presentable, then $\tr(\mathcal{S})$ has a subclass $\mathcal{\widetilde{S}}$, which is a set and $\tr(\mathcal{S})$-purity coincides with $\mathcal{\widetilde{S}}$-purity.  In the cases (i), (ii) and (iii) above, one can easily verify that $\mathcal{S}=\tr(\mathcal{S})$. In case (iv), $\tr(\mathcal{S})$-purity coincides with $\mathcal{\widetilde{S}}$-purity, where $\mathcal{\widetilde{S}}$
is the set $$\{R/I| I \   \  \text{is a finitely generated ideal of} \   \  R\}.$$
\end{enumerate}
\end{example}

\begin{proposition}\label{28} Let $\mathcal{S}$ be a set-presentable class of finitely presented $R$-modules
containing $R$. Then every $R$-module $M$ admits an $\mathcal{S}$-pure injective preenvelope.
\end{proposition}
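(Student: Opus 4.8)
The plan is to use the standard cardinality-bounding argument that produces preenvelopes for classes closed under products, applied to the class of $\mathcal{S}$-pure injective $R$-modules. First I would record that this class is closed under arbitrary products: this is precisely Lemma \ref{21}(i). The heart of the matter is then the usual ``solution set'' criterion: a class closed under products is preenveloping provided that for every $R$-module $M$ there is a \emph{set} (not a proper class) of $\mathcal{S}$-pure injective modules $\{E_\lambda\}_{\lambda\in\Lambda}$ together with maps $M\to E_\lambda$ such that every homomorphism $M\to E$ into an $\mathcal{S}$-pure injective $E$ factors through some $E_\lambda$; taking the product $\prod_\lambda E_\lambda$ and the diagonal map then yields the preenvelope.

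So the main work is to exhibit such a bounding set, and here I would exploit set-presentability. Fix an infinite cardinal $\kappa \geqslant \max\{|R|,|M|,\aleph_0\}$ together with $|\mathcal{S}^{*}|$. The key claim is that if $f\colon M\to E$ is any homomorphism into an $\mathcal{S}$-pure injective module, then $f$ factors as $M\to N\hookrightarrow E$ where $N$ is an $\mathcal{S}$-pure submodule of $E$ of cardinality at most $\kappa$; since $N$ is $\mathcal{S}$-pure in an $\mathcal{S}$-pure injective module, $N$ is itself $\mathcal{S}$-pure injective (the inclusion $N\hookrightarrow E$ splits by $\mathcal{S}$-pure injectivity of $E$, so $N$ is a direct summand of $E$, hence $\mathcal{S}$-pure injective by Lemma \ref{21}(i) again). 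To build $N$ I would iterate a countable chain construction: starting from $f(M)$, repeatedly enlarge the current submodule so as to make it ``solve over $E$ what it ought to solve'', in the sense of condition (iv) of Proposition \ref{24}. Concretely, given a submodule $M'\subseteq E$ with $|M'|\leqslant\kappa$, for each matrix $(r_{ij})$ with $\Coker(r_{ij})\cong$ some $U^{*}\in\mathcal{S}^{*}$ and each tuple $(a_1,\dots,a_n)\in M'^{\,n}$ for which the system $\sum_i r_{ij}x_i=a_j$ is solvable in $E$, adjoin one such solution from $E$; the set of such data has cardinality at most $\kappa$, so the enlargement $M''$ still has cardinality at most $\kappa$. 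Taking the union $N$ over a countable chain $M'\subseteq M''\subseteq\cdots$ gives a submodule of $E$ with $|N|\leqslant\kappa$ in which every relevant system solvable in $E$ is already solvable in $N$; by Proposition \ref{24}(iv)$\Rightarrow$(i) applied to $0\to N\hookrightarrow E\to E/N\to 0$, this inclusion is $\mathcal{S}$-pure, as required.

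Having produced, for each $M$, a factorization of every map $M\to E$ through an $\mathcal{S}$-pure injective module of cardinality $\leqslant\kappa$, I would then note that up to isomorphism there is only a set of $R$-modules of cardinality $\leqslant\kappa$, hence only a set of pairs $(E_\lambda,\,M\to E_\lambda)$ with $E_\lambda$ $\mathcal{S}$-pure injective of cardinality $\leqslant\kappa$; the product of these $E_\lambda$ is $\mathcal{S}$-pure injective by Lemma \ref{21}(i), and the induced map $M\to\prod_\lambda E_\lambda$ is the desired $\mathcal{S}$-pure injective preenvelope. The main obstacle I anticipate is the bookkeeping in the chain construction — verifying that each enlargement stays within the cardinal bound $\kappa$ and that the countably-many-steps union really does satisfy the solvability condition of Proposition \ref{24}(iv) (one must be careful that a system with constants in $N=\bigcup_n M_n$ already has all its constants in some $M_n$, which is where finiteness of the constant tuple and $\aleph_1$-directedness of the chain are used).
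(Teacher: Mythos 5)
Your overall strategy (the solution-set argument for classes closed under products) is a legitimate alternative route to a preenvelope, and your L\"owenheim--Skolem-style closure producing an $\mathcal{S}$-pure submodule $N\subseteq E$ with $f(M)\subseteq N$ and $|N|\leqslant\kappa$ is sound, using Proposition \ref{24}(iv). However, the argument breaks at the step where you claim that $N$ is itself $\mathcal{S}$-pure injective ``because the inclusion $N\hookrightarrow E$ splits by $\mathcal{S}$-pure injectivity of $E$.'' The $\mathcal{S}$-pure injectivity of $E$ lets you extend homomorphisms \emph{into} $E$ along $\mathcal{S}$-pure monomorphisms; to split $N\hookrightarrow E$ you would have to extend $1_{N}$ to a retraction $E\to N$, i.e.\ you would need $N$ to be $\mathcal{S}$-pure injective --- exactly what you are trying to prove. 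The claim is in fact false: already for ordinary purity ($\mathcal{S}=$ all finitely presented modules), $\bigoplus_{n\geqslant 1}\mathbb{Z}/p^{n}\mathbb{Z}$ is a pure submodule of the pure injective module $\prod_{n\geqslant 1}\mathbb{Z}/p^{n}\mathbb{Z}$ but is not pure injective. Since your solution set then consists of modules $N$ that need not lie in the class, the product $\prod_{\lambda}E_{\lambda}$ is not known to be $\mathcal{S}$-pure injective and the argument does not close.

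The missing ingredient is a way to pass from the small $\mathcal{S}$-pure submodule $N$ to a small $\mathcal{S}$-pure injective module containing it $\mathcal{S}$-purely --- and this is precisely what the paper constructs directly, which makes the cardinality detour unnecessary. The paper takes $\Gamma$ to be the set of all pairs $(U,f)$ with $U\in\mathcal{S}^{*}$ and $f\in\Hom_{R}(M,\tr(U)^{+})$, forms $E:=\prod_{\gamma\in\Gamma}\tr(U_{\gamma})^{+}$ (which is $\mathcal{S}$-pure injective by Proposition \ref{24}, Lemma \ref{22} and Lemma \ref{21}(i)) together with the diagonal map $\phi:M\to E$, and checks via the Pontryagin duality diagram that $\phi$ is an $\mathcal{S}$-pure monomorphism; by Corollary \ref{25}, any $\mathcal{S}$-pure monomorphism into an $\mathcal{S}$-pure injective module is automatically a preenvelope, since $\Hom_{R}(-,E')$ is exact on the resulting $\mathcal{S}$-pure exact sequence for every $\mathcal{S}$-pure injective $E'$. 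If you want to retain your framework, you must replace the false splitting claim by applying such an embedding to $N$ itself (its target has cardinality bounded in terms of $|N|$, $|R|$ and $|\mathcal{S}^{*}|$), but at that point you have essentially reproduced the paper's construction and the solution-set machinery is redundant.
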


\begin{proof} Since $\mathcal{S}$ is set-presentable, it has a subclass $\mathcal{S}^*$, which is a set,
with the property that for any $U\in \mathcal{S}$ there is a $U^*\in \mathcal{S}^*$ with $U^*\cong U.$
Let $\Gamma$ be the set of all pairs $(U,f)$ with $U\in \mathcal{S}^*$ and $f\in \Hom_R(M,\tr(U)^+)$,
and for each $\gamma\in \Gamma$ denote the corresponding $U$ and $f$ by $U_\gamma$ and $f_
\gamma$. Let $E:=\prod_{\gamma\in \Gamma}\tr(U_\gamma)^+$ and let $\phi:M\to E$ be an $R$-homomorphism
defined by $\phi(x)=(f_\gamma(x))_\gamma$. Then, by Proposition 2.4, Lemma 2.2 and Lemma 2.1 (i),
it follows that $E$ is an $\mathcal{S}$-pure injective $R$-module. As $R\in \mathcal{S}$, it is easy to
see that $\phi$ is injective. We show that $\phi$ is our desired $\mathcal{S}$-pure injective preenvelope.
To this end, by Corollary \ref{25}, it is enough to check that $\phi$ is an $\mathcal{S}$-pure monomorphism.
For any $U\in \mathcal{S}^*,$ the homomorphism $$1_{\tr(U)}\otimes \phi:\tr(U)\otimes_RM\to \tr(U)
\otimes_RE$$ is injective if and only if $$(1_{\tr(U)}\otimes \phi)^+:(\tr(U)\otimes_RE)^+\to (\tr(U)
\otimes_RM)^+$$ is surjective. Now, consider the following commutative diagram:
$$\xymatrix{(\tr(U)\otimes_RE)^+\  \ar[d]_{\cong}\ar[r]^{(1_{\tr(U)}\otimes \phi)^+}& (\tr(U)\otimes_RM)^+
\ar[d]^{\cong} \\ \Hom_R(E,\tr(U)^+)\hspace{4mm}\ \ar[r]^{\Hom_R(\phi,1_{\tr(U)^+})}& \hspace{6mm}\Hom_R(M,
\tr(U)^+).}$$
Since, the vertical maps are isomorphisms and, by our construction, the bottom map is surjective, we
deduce that $1_{\tr(U)}\otimes \phi$ is injective. Thus, by Proposition  \ref{24}, it turns out that $\phi$
is an $\mathcal{S}$-pure monomorphism.
\end{proof}

Let $\mathcal{F}$ and  $\mathcal{G}$ be two classes of $R$-modules.  The functor $\Hom_R(-,\sim)$ is
said to be {\em right balanced} by $\mathcal{F}\times\mathcal{G}$ if for any $R$-modules $M$, there are
complexes $${\bf F_{\bullet}}=\cdots \to F_n\to F_{n-1}\to \cdots \to F_0\to M\to 0$$ and
$${\bf G^{\bullet}}=0\to M\to G^0\to \cdots \to G^n\to G^{n+1}\to \cdots$$ in which $F_n\in \mathcal{F},
G^n\in \mathcal{G}$ for all $n\geq 0,$ such that for any $F\in \mathcal{F}$ and any $G\in \mathcal{G}$,
the two complexes $\Hom_R({\bf F_{\bullet}},G)$ and $\Hom_R(F,{\bf G^{\bullet}})$ are exact.

The concept of pure homological dimensions was introduced in a special case by Griffith in \cite{G}, and
in a general setting by  Kie{\l}pi\'nski and Simson in \cite{KS}. For an $R$-module $M$, we define $\mathcal{S}$-\emph{pure projective dimension} of $M$ as the infimum of the length of left $\mathcal{S}$-pure
exact resolutions of $M$ which are consisting of $\mathcal{S}$-pure projective $R$-modules. Then \emph{left
global $\mathcal{S}$-pure projective dimension} of $R$ is defined to be the supremum of $\mathcal{S}$-pure projective dimensions of all $R$-modules. $\mathcal{S}$-\emph{pure injective dimension} of  $R$-modules and \emph{left global
$\mathcal{S}$-pure injective dimension} of $R$ are defined dually.

We end this section by recording the following useful application.

\begin{corollary}\label{29} Let $\mathcal{S}$ be a set-presentable class of finitely presented $R$-modules
containing $R$. Denote the class of all $\mathcal{S}$-pure projective (resp. $\mathcal{S}$-pure injective)
$R$-modules by $\mathcal{SP}$ (resp. $\mathcal{SI}$). Then the functor $\Hom_R(-,\sim)$ is right balanced
by $\mathcal{SP}\times\mathcal{SI}$. In particular, the left global $\mathcal{S}$-pure projective dimension of
$R$ is equal to its left global $\mathcal{S}$-pure injective dimension.
\end{corollary}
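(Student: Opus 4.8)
The plan is to obtain the two required resolutions from the preenveloping property established in Proposition~\ref{28} together with the fact, noted in the introduction following \cite[Proposition 1]{W}, that every $R$-module has an $\mathcal{S}$-pure projective precover. First I would build the complex $\mathbf{F_{\bullet}}$: given an $R$-module $M$, take an $\mathcal{S}$-pure projective precover $F_0\to M$; since the class $\mathcal{SP}$ contains $R$ (as $R\in\mathcal{S}$) and hence all free modules, this precover is surjective, and by construction the sequence $0\to K_0\to F_0\to M\to 0$ is $\mathcal{S}$-pure exact. Iterating on $K_0$ produces the desired left $\mathcal{S}$-pure exact resolution of $M$ by $\mathcal{S}$-pure projectives. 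Dually, using Proposition~\ref{28}, take an $\mathcal{S}$-pure injective preenvelope $M\to G^0$; this is an $\mathcal{S}$-pure monomorphism by the argument in that proof, so $0\to M\to G^0\to C^0\to 0$ is $\mathcal{S}$-pure exact, and iterating on $C^0$ yields the right $\mathcal{S}$-pure exact coresolution of $M$ by $\mathcal{S}$-pure injectives $\mathbf{G^{\bullet}}$.

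The exactness conditions in the definition of right balanced then follow from Corollary~\ref{25}. Indeed, splicing the short $\mathcal{S}$-pure exact sequences above shows that $\mathbf{F_{\bullet}}$ and $\mathbf{G^{\bullet}}$ are concatenations of $\mathcal{S}$-pure exact sequences. For any $\mathcal{S}$-pure injective $G$, applying $\Hom_R(-,G)$ to each short $\mathcal{S}$-pure exact piece of $\mathbf{F_{\bullet}}$ gives a short exact sequence by the definition of $\mathcal{S}$-pure injectivity (equivalently by Corollary~\ref{25}(iii)); reassembling these shows $\Hom_R(\mathbf{F_{\bullet}},G)$ is exact. Symmetrically, for any $\mathcal{S}$-pure projective $F$, applying $\Hom_R(F,-)$ to each short $\mathcal{S}$-pure exact piece of $\mathbf{G^{\bullet}}$ stays exact by the definition of $\mathcal{S}$-pure projectivity (Corollary~\ref{25}(ii)), so $\Hom_R(F,\mathbf{G^{\bullet}})$ is exact. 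This establishes that $\Hom_R(-,\sim)$ is right balanced by $\mathcal{SP}\times\mathcal{SI}$.

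For the final assertion, the standard dimension-shifting argument with balanced functors applies: once $\Hom_R(-,\sim)$ is right balanced by $\mathcal{SP}\times\mathcal{SI}$, the relative Ext functor can be computed either from a left $\mathcal{S}$-pure projective resolution of the first variable or from a right $\mathcal{S}$-pure injective coresolution of the second variable, and the two computations agree. The $\mathcal{S}$-pure projective dimension of an $R$-module $M$ is then characterized as the largest $n$ with $\operatorname{Ext}^n_{\mathcal{SP}\mathcal{SI}}(M,-)\neq 0$, and dually for $\mathcal{S}$-pure injective dimension; taking suprema over all modules gives that the left global $\mathcal{S}$-pure projective dimension of $R$ equals the left global $\mathcal{S}$-pure injective dimension. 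I expect the only delicate point to be checking that the spliced complexes genuinely satisfy the exactness requirement for \emph{all} $F\in\mathcal{SP}$ and $G\in\mathcal{SI}$ simultaneously — but this is immediate from Corollary~\ref{25}, which converts $\mathcal{S}$-pure exactness into the needed $\Hom$-exactness against every member of these two classes, so there is no real obstacle beyond bookkeeping.
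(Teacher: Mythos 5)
Your proposal is correct and follows essentially the same route as the paper: both construct the resolution $\mathbf{F_{\bullet}}$ by iterating $\mathcal{S}$-pure projective precovers (Warfield's result) and the coresolution $\mathbf{G^{\bullet}}$ by iterating the preenvelopes of Proposition~\ref{28}, verify the balancedness conditions via Corollary~\ref{25}, and then deduce the equality of global dimensions from the standard balanced-functor machinery (which the paper makes explicit by citing Theorems 8.2.14, 8.2.3, 8.2.5 and Corollaries 8.2.4, 8.2.6 of Enochs--Jenda). No gaps.
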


\begin{proof} Let $M$ and $N$ be two $R$-modules. In view of \cite[Proposition 1]{W} and Corollary
\ref{25}, we can construct an exact complex $${\bf P_{\bullet}}=\cdots \to P_n\to P_{n-1}\to \cdots
\to P_0\to M\to 0$$ such that each $P_n$ is $\mathcal{S}$-pure projective, and for any
$\mathcal{S}$-pure projective $R$-module $P$ and any $\mathcal{S}$-pure injective $R$-module $I$,
the two complexes $\Hom_R(P,{\bf P_{\bullet}})$ and $\Hom_R({\bf P_{\bullet}},I)$ are exact. Also,
by Proposition  \ref{28} and Corollary \ref{25}, we can construct an exact complex $${\bf I^{\bullet}}
=0\to N\to I^0\to \cdots \to I^n\to I^{n+1}\to \cdots$$ such that each $I^n$ is $\mathcal{S}$-pure
injective, and for any $\mathcal{S}$-pure injective $R$-module $I$ and any $\mathcal{S}$-pure
projective $R$-module $P,$ the two complexes $\Hom_R({\bf I^{\bullet}},I)$ and $\Hom_R(P,I^{\bullet})$
are exact. Thus, $\Hom_R(-,\sim)$ is right balanced  by $\mathcal{SP}\times\mathcal{SI}$.

Denote the complexes $$\cdots \to P_n\to P_{n-1}\to \cdots \to P_0\to 0$$  and $$0\to I^0\to \cdots \to
I^n\to I^{n+1}\to \cdots$$ by ${\bf P_{\circ}}$ and ${\bf I^{\circ}}$, respectively. Then \cite[Theorem 8.2.14]{EJ} yields that the complexes $\Hom_R({\bf P_{\circ}},N)$ and $\Hom_R(M,{\bf
I^{\circ}})$ have isomorphic homology modules. Let $n$ be a non-negative integer. In view of
\cite[Theorem 8.2.3 (2) and Corollary 8.2.4 (2)]{EJ}, it is straightforward to check that the pure projective
dimension of $M$ is less than or equal $n$
if and only if $H^{n+1}(\Hom_R({\bf P_{\circ}},L))=0$ for all $R$-modules $L$. Also, by \cite[Theorem
8.2.5 (1) and Corollary 8.2.6 (1)]{EJ}, we can deduce that the pure injective dimension of $N$ is less
than or equal $n$ if and only if $H^{n+1}(\Hom_R(L,{\bf I^{\circ}}))=0$ for all $R$-modules $L$. These
facts yield that the left global $\mathcal{S}$-pure projective dimension of $R$ is equal to its left
global $\mathcal{S}$-pure injective dimension.
\end{proof}

\section{$\mathcal{S}$-pure injective envelops}

For proving Theorem \ref{38}, which is our main result, we need to prove the following five preliminary
lemmas. We begin this section with the following definition (compare with \cite{EJ}, \cite{FSZ}, \cite{K},
\cite{KS}, \cite{S2} and \cite{W}).

\begin{definition}\label{31} Let $\mathcal{S}$ be a class of $R$-modules and $N$ an $\mathcal{S}$-pure
submodule of an $R$-module $M$.
\begin{enumerate}
\item[(i)] We say $M$ is an $\mathcal{S}$-{\em pure essential extension} of $N$ if any $R$-homomorphism $\varphi:M\to L$ with $\varphi|_{N}$ $\mathcal{S}$-pure monomorphism, is injective.
\item[(ii)]  We say $M$ is a {\em maximal $\mathcal{S}$-pure essential extension} of $N$ if $M$ is
an $\mathcal{S}$-pure essential extension of $N$ and no proper extension of $M$ is an $\mathcal{S}$-pure
essential extension of $N$.
\item[(iii)]  We say $M$ is a {\em minimal $\mathcal{S}$-pure injective extension} of $N$ if $M$
is $\mathcal{S}$-pure injective and no proper $\mathcal{S}$-pure injective submodule of $M$ contains $N$.
\end{enumerate}
\end{definition}

\begin{lemma}\label{32} Let $\mathcal{S}$ be a class of $R$-modules. Let $M$ and $M^{\prime}$ be two
$R$-modules and $f:M \to M^{\prime}$ an $R$-isomorphism. Let $N$ be a submodule of $M$ and $N^{\prime}:
=f(N).$
\begin{enumerate}
\item[(i)] $N$ is an $\mathcal{S}$-pure submodule of $M$ if and only if $N^{\prime}$ is an $\mathcal{S}$-pure
submodule of $M^{\prime}$.
\item[(ii)] $M$ is an $\mathcal{S}$-pure essential extension of $N$ if and only if $M^{\prime}$
is an $\mathcal{S}$-pure essential extension of $N^{\prime}$.
\item[(iii)] $M$ is a maximal $\mathcal{S}$-pure essential extension of $N$ if and only if
$M^{\prime}$ is a maximal $\mathcal{S}$-pure essential extension of $N^{\prime}$.
\end{enumerate}
\end{lemma}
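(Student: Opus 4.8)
The plan is to prove all three parts by transporting structure along the isomorphism $f$; nothing beyond unwinding the definitions is needed, apart from one set-theoretic replacement step in part~(iii). First I would dispose of part~(i). The inclusion $N\hookrightarrow M$ is $\mathcal{S}$-pure precisely when $\Hom_R(U,M)\to\Hom_R(U,M/N)$ is surjective for every $U\in\mathcal{S}$. Since $f$ restricts to an isomorphism $N\to N'$ and induces an isomorphism $M/N\to M'/N'$ commuting with the canonical surjections, the functor $\Hom_R(U,-)$ turns this into a commutative square whose horizontal arrows are $\Hom_R(U,M)\to\Hom_R(U,M/N)$ and $\Hom_R(U,M')\to\Hom_R(U,M'/N')$ and whose vertical arrows are isomorphisms; hence one horizontal arrow is surjective if and only if the other is. (Alternatively one may invoke the characterization of $\mathcal{S}$-pure exactness in Proposition~\ref{24}.) At this point I would also record two trivial observations used below: an isomorphism is always an $\mathcal{S}$-pure monomorphism, since its cokernel is $0$; and precomposing an $\mathcal{S}$-pure monomorphism $g\colon B\to C$ with an isomorphism $h$ again yields an $\mathcal{S}$-pure monomorphism, because $\im(gh)=\im g$, so $\Coker(gh)=\Coker g$ and the relevant surjectivity condition is literally unchanged.

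For part~(ii) it suffices, by replacing $f$ with $f^{-1}$, to prove one implication. Assume $M$ is an $\mathcal{S}$-pure essential extension of $N$; by part~(i) we already know $N'$ is an $\mathcal{S}$-pure submodule of $M'$, so only the essentiality condition remains. Let $\psi\colon M'\to L$ be an $R$-homomorphism with $\psi|_{N'}$ an $\mathcal{S}$-pure monomorphism. Then $\psi f\colon M\to L$ satisfies $(\psi f)|_N=\psi|_{N'}\circ(f|_N)$, which is an $\mathcal{S}$-pure monomorphism by the second observation above (composition of $\psi|_{N'}$ with the isomorphism $f|_N\colon N\to N'$). Since $M$ is $\mathcal{S}$-pure essential over $N$, $\psi f$ is injective, and as $f$ is an isomorphism, $\psi$ is injective. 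Hence $M'$ is an $\mathcal{S}$-pure essential extension of $N'$.

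For part~(iii), again one implication suffices. Suppose $M$ is a maximal $\mathcal{S}$-pure essential extension of $N$; by part~(ii), $M'$ is an $\mathcal{S}$-pure essential extension of $N'$. If it were not maximal, choose a proper extension $M'\subsetneq W$ that is still an $\mathcal{S}$-pure essential extension of $N'$. Using transport of structure I would build a module $\widetilde{M}$ containing $M$ as a proper submodule together with an isomorphism $g\colon\widetilde{M}\to W$ whose restriction to $M$ is $\iota\circ f$, where $\iota\colon M'\hookrightarrow W$ is the inclusion (concretely, delete the subset $f(M)=M'$ from $W$ and glue in a copy of $M$ along $f$, transporting the operations). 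Then $g(N)=\iota(f(N))=N'$, and $M\subsetneq\widetilde{M}$ is proper because $g(M)=M'\subsetneq W$ is. Applying part~(ii) to the isomorphism $g$ (in the direction ``$W$ essential over $N'$ implies $\widetilde{M}$ essential over $g^{-1}(N')=N$''), we conclude that $\widetilde{M}$ is an $\mathcal{S}$-pure essential extension of $N$ strictly containing $M$, contradicting maximality. Therefore $M'$ is maximal.

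The only step that is not pure definition-chasing is the construction of $\widetilde{M}$ in part~(iii): one must realize an abstract extension of the isomorphic copy $M'$ as an honest extension of $M$. This is the routine device of replacing a submodule by an isomorphic copy, and once it is in place everything else follows by chasing the definitions through $f$.
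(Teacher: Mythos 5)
Your proof is correct and follows essentially the same route as the paper: parts (i) and (ii) by transporting the definitions along $f$ (the paper also composes with $f$ and uses symmetry via $f^{-1}$), and part (iii) by realizing an extension of $M'$ as an extension of $M$ via an isomorphism and then applying (ii). The only cosmetic difference is that where you construct the extension $\widetilde{M}$ of $M$ by hand (cutting out $M'$ and gluing in $M$), the paper delegates exactly this transport-of-structure step to a citation (Sharpe--V\'amos, Proposition 1.1), and argues directly rather than by contradiction.
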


\begin{proof} (i) is clear.

(ii) Assume that $M$ is an $\mathcal{S}$-pure essential extension of $N$. By (i), $N^{\prime}$ is
an $\mathcal{S}$-pure submodule of $M^{\prime}$. Let $\varphi:M^{\prime}\to L$ be an $R$-homomorphism
such that $\varphi|_{N^{\prime}}$ is an $\mathcal{S}$-pure monomorphism. Then $\varphi f:M\to L$ is
an $R$-homomorphism such that $(\varphi f)|_{N}$ is an $\mathcal{S}$-pure monomorphism. Now, as $M$
is an $\mathcal{S}$-pure essential extension of $N$, it follows that $\varphi f$ is injective, and so
$\varphi$ is also injective. The converse follows by the symmetry. Note that $f^{-1}:M^{\prime}\to M$
is an $R$-isomorphism with $f^{-1}(N^{\prime})=N.$

(iii) By the symmetry, it is enough to show the ``only if" part. Suppose that M is a maximal
$\mathcal{S}$-pure essential extension of $N$. By (ii), $M^{\prime}$ is an $\mathcal{S}$-pure
essential extension of $N^{\prime}$. Let $L^{\prime}$ be an extension of $M^{\prime}$ which is an
$\mathcal{S}$-pure essential extension of $N^{\prime}$. By \cite[Proposition 1.1]{SV},
there are an extension $L$ of $M$ and an $R$-isomorphism $g:L\to L^{\prime}$ such that the following
diagram commutes:
$$\xymatrix{N \ \ar[d]_{f|_{N}} \ar@{^{(}->}[r] &M\  \ar[d]_{f}\ar@{^{(}->}[r] &L\ar[d]^{g} \\
N^{\prime}\ \ar[r]\ar@{^{(}->}[r] &M^{\prime}\ \ar@{^{(}->}[r]& L^{\prime} }$$\\
It follows by (ii), that $L$ is an $\mathcal{S}$-pure essential extension of $N$. Hence,
by the maximality assumption on $M$, we obtain that $L=M$. Therefore $L^{\prime}=M^{\prime},$ as required.
\end{proof}

\begin{lemma}\label{33} Let $\mathcal{S}$ be a class of finitely presented $R$-modules and $N$ an
$\mathcal{S}$-pure submodule of an $R$-module $M$. Then there exists a submodule $K$ of $M$ such
that $K\cap N=0$ and $M/K$ is an $\mathcal{S}$-pure essential extension of $(K+N)/K$.
\end{lemma}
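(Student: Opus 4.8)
The plan is to run a standard Zorn's lemma argument on the poset of submodules $K$ of $M$ with $K\cap N=0$, ordered by inclusion, and then to show that a maximal such $K$ automatically produces an $\mathcal{S}$-pure essential extension. First I would note that the set $\Sigma=\{K\le M : K\cap N=0\}$ is nonempty (it contains $0$) and closed under unions of chains: if $\{K_i\}$ is a chain in $\Sigma$ then $(\bigcup_i K_i)\cap N=\bigcup_i(K_i\cap N)=0$, so $\bigcup_i K_i\in\Sigma$. By Zorn's lemma $\Sigma$ has a maximal element $K$. The candidate claim is that this $K$ works, i.e. $M/K$ is an $\mathcal{S}$-pure essential extension of $(K+N)/K$.

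The first thing to check is that $(K+N)/K$ is an $\mathcal{S}$-pure submodule of $M/K$. Since $K\cap N=0$, the composite $N\hookrightarrow M\twoheadrightarrow M/K$ identifies $N$ with $(K+N)/K$, and one needs that the short exact sequence $0\to (K+N)/K\to M/K\to M/(K+N)\to 0$ is $\mathcal{S}$-pure. I would use the solvability-of-equations criterion, Proposition \ref{24}(iv): given a system $\sum_i r_{ij}x_i=\bar a_j$ over $(K+N)/K$ with $\Coker(r_{ij})\in\mathcal{S}$ that is solvable in $M/K$, lift everything to $M$. A solution in $M/K$ lifts to elements $y_i\in M$ with $\sum_i r_{ij}y_i=a_j+k_j$ for some $a_j\in N$ and $k_j\in K$; the point is that by $\mathcal{S}$-purity of $N$ in $M$ one can adjust to get a solution inside $N$, hence inside $(K+N)/K$. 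This is where the hypothesis that $N$ is $\mathcal{S}$-pure in $M$ is consumed, and some care is needed to keep track of the $K$-part — I expect this to be slightly fiddly but routine.

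The heart of the argument is $\mathcal{S}$-pure essentiality. Let $\varphi:M/K\to L$ be an $R$-homomorphism whose restriction to $(K+N)/K$ is an $\mathcal{S}$-pure monomorphism; I must show $\varphi$ is injective. Composing with $M\twoheadrightarrow M/K$ gives $\psi:M\to L$ with $\Ker\psi\supseteq K$, and $\psi|_N$ is (up to the identification $N\cong(K+N)/K$) an $\mathcal{S}$-pure monomorphism — in particular $\psi|_N$ is injective, so $\Ker\psi\cap N=0$. Hence $\Ker\psi\in\Sigma$ and $\Ker\psi\supseteq K$; by maximality of $K$ we get $\Ker\psi=K$, which is exactly the statement that $\varphi$ is injective. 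The main obstacle is really just the bookkeeping in the $\mathcal{S}$-purity verification of step two, since Proposition \ref{24}(iv) has to be applied carefully with the ambient module $M/K$ and the submodule $(K+N)/K$; once that is in place, the essentiality step and the Zorn argument are both short.
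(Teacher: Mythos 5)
There is a genuine gap, and it is exactly at the point you flag as ``slightly fiddly but routine'': the purity of $(K+N)/K$ in $M/K$. Your poset $\Sigma=\{K\le M: K\cap N=0\}$ is too coarse, and a maximal element of it need not satisfy the purity condition. Concretely, take $R=\mathbb{Z}$, $M=\mathbb{Z}^2$, $N=\mathbb{Z}(1,0)$ (a direct summand, hence $\mathcal{S}$-pure for any $\mathcal{S}$), and $K=\mathbb{Z}(1,2)$. Then $K\cap N=0$ and $K$ is maximal with this property (if $(a,b)\notin K$ and $K+\mathbb{Z}(a,b)$ still met $N$ trivially, then $2(a,b)-b(1,2)=(2a-b,0)$ forces $b=2a$, i.e.\ $(a,b)\in K$). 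Yet under the isomorphism $M/K\cong\mathbb{Z}$, $(a,b)\mapsto 2a-b$, the submodule $(K+N)/K$ goes to $2\mathbb{Z}$, which is not pure in $\mathbb{Z}$: the equation $2x=\overline{(1,0)}$ is solvable in $M/K$ but not in $(K+N)/K$. Tracing through your lifting argument shows why it cannot be repaired: you arrive at $\sum_i r_{ij}y_i=a_j+k_j$ with $a_j\in N$ and $k_j\in K$, but these constants lie in $N+K$, not in $N$, so Proposition \ref{24}(iv) for the pair $N\subseteq M$ simply does not apply; there is no ``adjustment'' that removes the $K$-part (in the example, $2x=(1,0)$ is not solvable in $M$ at all).

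The fix, which is what the paper does, is to build the purity requirement into the poset: $\Sigma$ should consist of those $U$ with $U\cap N=0$ \emph{and} $(U+N)/U$ an $\mathcal{S}$-pure submodule of $M/U$. This changes the two places where you thought the work was elsewhere. First, closure under unions of chains is no longer the trivial computation $(\bigcup K_i)\cap N=0$; one must verify condition (ii) for the union, which is where the equation-lifting argument genuinely lives (a finite system over $M/\widetilde K$ descends to some $M/K_\beta$, is solved there in $(K_\beta+N)/K_\beta$, and the solution is pushed back up). Second, in the essentiality step your observation that $\Ker\psi\supseteq K$ and $\Ker\psi\cap N=0$ is correct but no longer enough: you must also show $\Ker\psi\in\Sigma$, i.e.\ that $(N+\Ker\psi)/\Ker\psi$ is $\mathcal{S}$-pure in $M/\Ker\psi$, and this is precisely where the hypothesis that $\varphi|_{(K+N)/K}$ is an $\mathcal{S}$-\emph{pure} monomorphism (rather than merely injective) is consumed. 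The fact that your version of the essentiality step uses only injectivity of the restriction is itself a warning sign that the argument is proving the wrong statement.
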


\begin{proof} Let $\Sigma$ denote the set of all submodules $U$ of $M$ which satisfy the following
conditions:
\begin{enumerate}
\item[(i)] $U\cap N=0;$ and
\item[(ii)] $(U+N)/U$ is an $\mathcal{S}$-pure submodule of $M/U$.
\end{enumerate}
Then $\Sigma$ is not empty, because $0\in \Sigma$. Let $\{K_{\alpha}\}_{\alpha\in \Omega}$ be a totally
ordered subset of $\Sigma$ and set $\widetilde{K}:=\cup _{\alpha\in \Omega} K_{\alpha}.$ We show that
$\widetilde{K}$ satisfies the conditions (i) and (ii).  Clearly, $\widetilde{K}\cap N=0.$ Let $(r_{ij})
\in \Hom_{R}(R^{n},R^{k})$ be a matrix with $\Coker (r_{ij})\in \mathcal{S}.$ Let $$\sum_{i=1}^{k}r_{ij} x_{i}=a_{j}+\widetilde{K}; \     \  1\leqslant j\leqslant n  \    \ (*)$$ be a set of linear equations
with constants in $(\widetilde{K}+N)/\widetilde{K}$. Let $y_{1}+\widetilde{K},\ldots, y_{k}+
\widetilde{K}$ be a solution of these equations in $M/\widetilde{K}$. Then $\sum^{k}_{i=1} r_{ij}y_{i}
-a_{j}\in \widetilde{K}$ for all $j=1,\ldots, n.$ There exists $\beta\in \Omega$ such that $\sum^{k}_{i=1}r_{ij}y_{i}-a_{j}\in K_{\beta}$ for all $j=1,\ldots, n.$  So, $y_{1}+K_{\beta},\ldots,
y_{k}+K_{\beta}$ is a solution of the equations $\sum_{i=1}^{k}r_{ij} x_{i}=a_{j}+K_{\beta};$ $1\leqslant
j\leqslant n$ in $M/K_{\beta}.$ Now, as $(K_{\beta}+N)/K_{\beta}$ is an $\mathcal{S}$-pure submodule of
$M/K_{\beta},$ there exists $z_{1},\ldots, z_{k}\in N$ such that $$\sum_{i=1}^{k}r_{ij}z_{i}-a_{j}\in K_{\beta}\subseteq \widetilde{K}$$ for all $j=1,\ldots, n.$ Hence, $z_{1}+\widetilde{K},\ldots, z_{k}+
\widetilde{K}$ is a solution of the equations $(*)$ in $(\widetilde{K}+N)/\widetilde{K}$. So, by
Proposition  \ref{24}, $(\widetilde{K}+N)/\widetilde{K}$ is an $\mathcal{S}$-pure submodule of
$M/\widetilde{K}$. Thus, by Zorn's Lemma, $\Sigma$ has a maximal element $K$.

Suppose that $\varphi:M/K\to L$ is an $R$-homomorphism such that the restriction $\varphi|_{(N+K)/K}$
is an $\mathcal{S}$-pure monomorphism. Let $\Ker \varphi=K^{\prime}/K.$ Then $\varphi$ induces an
$R$-monomorphism $$\varphi^{*}:(M/K)/(K^{\prime}/K)\to L.$$ Set $P:=((N+K)/K+K^{\prime}/K)/(K^{\prime}/K)$.
Since $\varphi((N+K)/K)$ is an $\mathcal{S}$-pure submodule of $L$ and
$$\begin{array}{ccc}
\varphi((N+K)/K) & = &\hspace{-2.5cm}\varphi^{*}(P) \\
&\leqslant & \varphi^{*}((M/K)/(K^{\prime}/K)) \\
&\leqslant & \hspace{-3cm}L,
\end{array}$$
it follows that $\varphi^{*}(P)$ is an $\mathcal{S}$-pure submodule of $\varphi^{*}((M/K)/(K^{\prime}/K)),$
and so by Lemma 3.2 (i), we conclude that $(N+K^{\prime})/K^{\prime}$ is an $\mathcal{S}$-pure submodule of
$M/K^{\prime}.$ Now, $K^{\prime}$ is a submodule of $M$ containing $K$ satisfying the condition $(ii).$ We
can easily check that $K^{\prime}$ also satisfies the condition $(i)$; i.e. $K^{\prime}\cap N=0$. Hence, by
the maximality of $K$, we obtain that $K^{\prime}=K,$ and so  $\varphi$ is injective.
\end{proof}

Next, as an application of the above lemma, we present a characterization of $\mathcal{S}$-pure injective
$R$-modules.

\begin{corollary}\label{34}  Let $\mathcal{S}$ be a set-presentable class of finitely presented $R$-modules
containing $R$. Then for an $R$-module $E$, the following are equivalent:
\begin{enumerate}
\item[(i)] $E$ is $\mathcal{S}$-pure injective.
\item[(ii)] $E$ has no proper $\mathcal{S}$-pure essential extension.
\end{enumerate}
\end{corollary}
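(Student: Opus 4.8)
The plan is to prove the two implications separately, with the substantive content living in (ii)$\Rightarrow$(i).

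\smallskip

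\textbf{The easy direction} (i)$\Rightarrow$(ii). Suppose $E$ is $\mathcal{S}$-pure injective and let $E\subseteq M$ be an $\mathcal{S}$-pure essential extension; I must show $M=E$. Since $E$ is $\mathcal{S}$-pure injective and $E\hookrightarrow M$ is an $\mathcal{S}$-pure monomorphism, the identity $1_E$ extends to an $R$-homomorphism $\varphi\colon M\to E$ with $\varphi|_E=1_E$. Then $\varphi|_E$ is (trivially) an $\mathcal{S}$-pure monomorphism, so by the definition of $\mathcal{S}$-pure essential extension $\varphi$ is injective; but $\varphi$ also splits the inclusion, hence $M=E\oplus\Ker\varphi=E$.

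\smallskip

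\textbf{The hard direction} (ii)$\Rightarrow$(i). The strategy is: embed $E$ into an $\mathcal{S}$-pure injective module and then use hypothesis (ii) to show the embedding is forced to be everything. First, by Proposition~\ref{28}, $E$ has an $\mathcal{S}$-pure injective preenvelope $\phi\colon E\to I$ with $I$ $\mathcal{S}$-pure injective; as noted in the construction in Proposition~\ref{28} (and since $R\in\mathcal{S}$), $\phi$ is an $\mathcal{S}$-pure monomorphism, so we may regard $E$ as an $\mathcal{S}$-pure submodule of $I$. Now apply Lemma~\ref{33} with $N=E$ and $M=I$: there is a submodule $K\subseteq I$ with $K\cap E=0$ such that $I/K$ is an $\mathcal{S}$-pure essential extension of $(K+E)/K$. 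Since $K\cap E=0$, the composite $E\hookrightarrow I\twoheadrightarrow I/K$ is an isomorphism onto $(K+E)/K$; so, transporting the structure along this isomorphism (using Lemma~\ref{32}(ii) to carry the $\mathcal{S}$-pure essential extension property across), $I/K$ becomes an $\mathcal{S}$-pure essential extension of (a copy of) $E$. By hypothesis (ii), $E$ has \emph{no} proper $\mathcal{S}$-pure essential extension, hence $I/K=(K+E)/K$, i.e. $I=K+E$, and combined with $K\cap E=0$ this gives $I=K\oplus E$. Therefore $E$ is a direct summand of the $\mathcal{S}$-pure injective module $I$, and by Lemma~\ref{21}(i) a direct summand of an $\mathcal{S}$-pure injective module is $\mathcal{S}$-pure injective (a summand is a factor of a product of two copies, or directly: apply the definition and use the splitting). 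Hence $E$ is $\mathcal{S}$-pure injective.

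\smallskip

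\textbf{Anticipated obstacle.} The only delicate point is checking that $I/K$ really is an $\mathcal{S}$-pure essential extension \emph{of the copy of $E$}, rather than merely of $(K+E)/K$ as an abstract submodule: one must verify that the isomorphism $E\xrightarrow{\ \sim\ }(K+E)/K$ induced by the quotient map is compatible with the $\mathcal{S}$-pure structure, which is exactly what Lemma~\ref{32}(i)–(ii) supplies. Everything else is a direct citation of Proposition~\ref{28}, Lemma~\ref{33}, and Lemma~\ref{21}(i). I expect the write-up to be short.
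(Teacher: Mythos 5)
Your proof is correct and follows essentially the same route as the paper: for (i)$\Rightarrow$(ii) split the inclusion and use essentiality to kill the kernel, and for (ii)$\Rightarrow$(i) embed $E$ $\mathcal{S}$-purely into an $\mathcal{S}$-pure injective module via Proposition~\ref{28}, apply Lemma~\ref{33} to produce the complement $K$, and conclude with Lemma~\ref{21}(i). Your explicit attention to transporting the essential-extension property across the isomorphism $E\cong (K+E)/K$ via Lemma~\ref{32} is a point the paper glosses over, but it is the same argument.
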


\begin{proof} (i)$\Rightarrow$(ii) Let $M$ be an $\mathcal{S}$-pure essential extension of $E$.
Then $0\to E\overset{i}\hookrightarrow M\to M/E\to 0$ is an $\mathcal{S}$-pure exact sequence.
Since $E$ is $\mathcal{S}$-pure injective, there is an $R$-homomorphism $f:M\to E$ such that
$fi=1_E$. Then $M=E+\Ker f$ and $E\cap \Ker f=0$. Denote the $R$-homomorphism $if:M\to M$ by
$\varphi$. Then $\varphi|_{E}=i$, and so $\varphi|_{E}$ is an $\mathcal{S}$-pure monomorphism.
Hence $\varphi$ is injective, because $M$ is an $\mathcal{S}$-pure essential extension of $E$.
This implies that $\Ker f=\Ker \varphi=0,$ and so $M=E.$

(ii)$\Rightarrow$(i) By Proposition  \ref{28}, there exists an $\mathcal{S}$-pure injective
extension $L$ of $E$. By Lemma \ref{33}, there is a submodule $K$ of $L$ such that $L/K$ is
an $\mathcal{S} $-pure essential extension of $(E+K)/K$ and $E\cap K=0.$ But, $E$ has no proper
$\mathcal{S} $-pure essential extension, and so $E+K=L.$  This implies that $L=E\oplus K.$
Now, by Lemma \ref{21} (i), we deduce that $E$ is $\mathcal{S}$-pure injective.
\end{proof}

\begin{lemma}\label{35} Let $\mathcal{S}$ be a class of finitely presented $R$-modules, $E$ an
$\mathcal{S}$-pure injective $R$-module and $N$ an $\mathcal{S}$-pure submodule of $E$. There
is a submodule $M$ of $E$ which is a maximal $\mathcal{S}$-pure essential extension of $N$.
\end{lemma}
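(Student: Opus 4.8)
The goal is to produce, inside a given $\mathcal{S}$-pure injective $E$ containing $N$ as an $\mathcal{S}$-pure submodule, a submodule $M\supseteq N$ that is maximal among the $\mathcal{S}$-pure essential extensions of $N$ sitting inside $E$. The natural strategy is a Zorn's Lemma argument on the poset
\[
\Sigma=\{\,M\subseteq E : M\supseteq N,\ M\text{ is an }\mathcal{S}\text{-pure essential extension of }N\,\},
\]
ordered by inclusion. It is nonempty since $N\in\Sigma$ (an $\mathcal{S}$-pure submodule is trivially an $\mathcal{S}$-pure essential extension of itself, as $N\subseteq N$ is an $\mathcal{S}$-pure monomorphism). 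A maximal element $M$ of $\Sigma$ will be the desired submodule: it is by construction an $\mathcal{S}$-pure essential extension of $N$, and I must then check that no proper extension $M'$ of $M$ — inside $E$ \emph{or outside} — is an $\mathcal{S}$-pure essential extension of $N$. For an extension inside $E$ this is immediate from maximality in $\Sigma$; for an arbitrary extension $M'\supsetneq M$, one transports along an isomorphism (as in Lemma~\ref{32}(iii), using \cite[Proposition 1.1]{SV}) to a copy of $M'$ built over $E$ and uses that $E$ is $\mathcal{S}$-pure injective to split $E$ off, so that the part of $M'$ lying over $E$ already witnesses failure of essentiality — I will need the key fact (essentially Corollary~\ref{34}) that an $\mathcal{S}$-pure essential extension of $N$ cannot properly extend an $\mathcal{S}$-pure injective module through which the embedding of $N$ factors as an $\mathcal{S}$-pure monomorphism.

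The heart of the matter is the chain condition in Zorn's Lemma. Let $\{M_\lambda\}_{\lambda\in\Lambda}$ be a chain in $\Sigma$ and put $\widetilde M=\bigcup_\lambda M_\lambda$. I must verify two things: (a) $N$ is an $\mathcal{S}$-pure submodule of $\widetilde M$, and (b) $\widetilde M$ is an $\mathcal{S}$-pure essential extension of $N$. For (a), I use the finite-solubility criterion of Proposition~\ref{24}(iv): any finite system of linear equations with constants in $N$ that is soluble in $\widetilde M$ already has its (finitely many) solution entries lying in some single $M_\lambda$, hence by $\mathcal{S}$-purity of $N$ in $M_\lambda$ the system is soluble in $N$. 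For (b), suppose $\varphi:\widetilde M\to L$ has $\varphi|_N$ an $\mathcal{S}$-pure monomorphism but $\varphi$ is not injective, so some $0\neq x\in\widetilde M$ lies in $\ker\varphi$; then $x\in M_\lambda$ for some $\lambda$, and $\varphi|_{M_\lambda}$ has the same restriction to $N$ and a nontrivial kernel, contradicting that $M_\lambda$ is an $\mathcal{S}$-pure essential extension of $N$. This handles the chain condition, so a maximal $M$ exists.

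It remains to upgrade "maximal in $\Sigma$" to "maximal $\mathcal{S}$-pure essential extension of $N$" in the sense of Definition~\ref{31}(ii). Suppose $M'$ is \emph{any} extension of $M$ (a priori not inside $E$) that is an $\mathcal{S}$-pure essential extension of $N$. Since $M\subseteq E$ with $M$ an $\mathcal{S}$-pure essential extension of $N$ and $E$ $\mathcal{S}$-pure injective, the inclusion $N\hookrightarrow E$ is an $\mathcal{S}$-pure monomorphism; viewing this map as defined on $N\subseteq M'$, $\mathcal{S}$-pure injectivity of $E$ extends it to some $\psi:M'\to E$, and $\psi|_M$ restricts on $N$ to the $\mathcal{S}$-pure inclusion. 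Because $M$ is an $\mathcal{S}$-pure essential extension of $N$ (so $\psi|_M$ is injective) and $M'$ is one over $N$ too, $\psi$ itself is injective, identifying $M'$ with a submodule $\psi(M')$ of $E$ containing $M$; by Lemma~\ref{32} this $\psi(M')$ is again an $\mathcal{S}$-pure essential extension of $N$, hence lies in $\Sigma$, hence equals $M$ by maximality, whence $M'=M$. The main obstacle, and the step requiring the most care, is exactly this last transport argument: making sure the extension $\psi$ of the $\mathcal{S}$-pure inclusion $N\hookrightarrow E$ produced by $\mathcal{S}$-pure injectivity of $E$ is forced to be \emph{injective} on all of $M'$, which is where $\mathcal{S}$-pure essentiality of $M$ over $N$ (not merely of $M'$) is used. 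The chain-union verification via Proposition~\ref{24}(iv) is routine by comparison.
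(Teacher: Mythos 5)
Your Zorn's Lemma setup and the verification of the chain condition are fine: the union $\widetilde M$ of a chain in $\Sigma$ contains $N$ as an $\mathcal{S}$-pure submodule by Proposition~\ref{24}(iv) (finitely many constants and solution entries lie in a single $M_\lambda$), and essentiality passes to the union because any element of $\ker\varphi$ already lies in some $M_\lambda$. This part parallels what the paper itself does at limit stages. The problem is the final upgrade from ``maximal in $\Sigma$'' to ``maximal in the sense of Definition~\ref{31}(ii)''. Given $M'\supsetneq M$ an $\mathcal{S}$-pure essential extension of $N$, the map $\psi:M'\to E$ you produce extends only the inclusion $N\hookrightarrow E$, so all you know is $\psi(M')\supseteq\psi(M)$; the assertion that $\psi(M')$ \emph{contains $M$} is unjustified, since $\psi|_M$ need not be the inclusion of $M$ into $E$. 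To force $\psi|_M=\mathrm{id}_M$ you would have to extend the inclusion $M\hookrightarrow E$ along $M\hookrightarrow M'$, which requires $M$ to be an $\mathcal{S}$-pure submodule of $M'$ --- and that is not known: purity of $N$ in $M'$ says nothing about intermediate submodules, and $\mathcal{S}$-pure essential extensions are not transitive in general (cf.\ the Fuchs--Salce--Zanardo reference \cite{FSZ}). Without $\psi(M')\supseteq M$, maximality of $M$ in $\Sigma$ gives nothing: $\psi(M')$ is just some element of $\Sigma$, possibly incomparable with $M$, so no contradiction results. Even in the degenerate case $\psi(M')=M$ you could not conclude $M'=M$, because $\psi(M)$ may be a proper submodule of $M$ isomorphic to $M$.

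The paper circumvents exactly this difficulty by reversing the order of the two steps. It first observes that \emph{every} $\mathcal{S}$-pure essential extension $L$ of $N$ embeds into $E$ over $N$ (by $\mathcal{S}$-pure injectivity of $E$ plus essentiality of $L$), whence $|L|\leq |E|$; it then shows by transfinite induction that a maximal $\mathcal{S}$-pure essential extension of $N$ exists \emph{abstractly} --- if not, one builds a strictly increasing tower $M_\beta$ of essential extensions indexed by all ordinals (taking unions at limits, as in your chain step), contradicting the cardinality bound --- and only afterwards transports this globally maximal object into $E$ via Lemma~\ref{32}(iii), which preserves maximality along an isomorphism carrying $N$ to $N$. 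Your argument would be repaired by adopting this order: prove abstract existence of a globally maximal essential extension first (your chain lemma does the limit stages), then embed it into $E$.
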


\begin{proof} Denote the inclusion map $N\hookrightarrow E$ by $i$. Let $L$ be an $\mathcal{S}$-pure
essential extension of $N$. Since $E$ is an $\mathcal{S}$-pure injective
$R$-module and $i$ is an $\mathcal{S}$-pure monomorphism, there exists an $R$-monomorphism
$\psi:L\to E$ such that $\psi|_N=i$. So, one has $|L|=|\psi(L)|\leq |E|$. If $L$ is a maximal
$\mathcal{S}$-pure essential extension of $N,$ then by Lemma \ref{32} (iii), $\psi(L)$ is also
a maximal $\mathcal{S}$-pure essential extension of $N$. Hence, the proof will be completed if
we show that $N$ has a maximal $\mathcal{S}$-pure essential extension. Suppose that the contrary
is true. Then, by using transfinite induction, we show that for any ordinal $\beta$, there is an
$\mathcal{S}$-pure essential extension $M_\beta$ of $N$. Set $M_0:=N$. Let $\beta$ be an ordinal
and assume that $M_\alpha$ is defined for all $\alpha<\beta$. Assume that $\beta$ has a predecessor
$\beta-1$. As $M_{\beta-1}$ is not a maximal $\mathcal{S}$-pure essential extension of $N$, there
is a proper extension $M_{\beta}$ of $M_{\beta-1}$ such that $M_{\beta}$ is an $\mathcal{S}$-pure
essential extension of $N$. If $\beta$ is a limit ordinal, then in view of Proposition  \ref{24},
it is easy to see that $M_\beta:=\cup_{\alpha<\beta}M_\alpha$ is an $\mathcal{S}$-pure essential
extension of $N$. Let $\beta$ be an ordinal with $|\beta|>|E|$. Then, $|\beta|\leq |M_\beta|\leq |E|$,
which is a contradiction.
\end{proof}

\begin{lemma}\label{36} Let $\mathcal{S}$ be a set-presentable class of finitely presented
$R$-modules containing $R$. Let $M$ be an $R$-module and $E$ a maximal $\mathcal{S}$-pure
essential extension of $M$. Then $E$ is an $\mathcal{S}$-pure injective $R$-module.
\end{lemma}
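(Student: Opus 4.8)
The goal is to show that a maximal $\mathcal{S}$-pure essential extension $E$ of $M$ is $\mathcal{S}$-pure injective. By Corollary \ref{34}, it suffices to prove that $E$ has no proper $\mathcal{S}$-pure essential extension. So suppose $L$ is an $\mathcal{S}$-pure essential extension of $E$; I must show $L=E$. The first step is to record that "$\mathcal{S}$-pure essential" is transitive in the appropriate sense: since $E$ is an $\mathcal{S}$-pure essential extension of $M$ (in particular $M$ is an $\mathcal{S}$-pure submodule of $E$) and $L$ is an $\mathcal{S}$-pure essential extension of $E$, it follows that $L$ is an $\mathcal{S}$-pure essential extension of $M$. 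Indeed, $M$ is an $\mathcal{S}$-pure submodule of $L$ (composition of $\mathcal{S}$-pure monomorphisms $M\hookrightarrow E\hookrightarrow L$ is an $\mathcal{S}$-pure monomorphism, which follows from Proposition \ref{24}); and if $\varphi\colon L\to X$ is an $R$-homomorphism with $\varphi|_M$ an $\mathcal{S}$-pure monomorphism, then $(\varphi|_E)|_M=\varphi|_M$ is an $\mathcal{S}$-pure monomorphism, so since $E$ is an $\mathcal{S}$-pure essential extension of $M$, the map $\varphi|_E$ is injective; in particular $(\varphi|_E)|_M$ is still $\mathcal{S}$-pure, so $\varphi$ is injective because $L$ is an $\mathcal{S}$-pure essential extension of... wait — more directly: $\varphi|_E$ injective means $\varphi|_E$ restricted to $M$ is an $\mathcal{S}$-pure monomorphism, hence by $L$ being $\mathcal{S}$-pure essential over $E$ we need $\varphi|_E$ to be an $\mathcal{S}$-pure monomorphism; but an injective restriction that is $\mathcal{S}$-pure on the submodule $M$ forces, via the essentiality of $E$ over $M$... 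I will instead argue cleanly: $L$ is an $\mathcal{S}$-pure essential extension of $M$ because any $\varphi\colon L\to X$ with $\varphi|_M$ an $\mathcal{S}$-pure monomorphism restricts to $\varphi|_E$ with $(\varphi|_E)|_M$ an $\mathcal{S}$-pure monomorphism; essentiality of $E$ over $M$ gives $\varphi|_E$ injective, and then $\varphi|_E$ is an $\mathcal{S}$-pure monomorphism onto its image (again by essentiality, or by noting $M\subseteq \varphi(E)$ is $\mathcal{S}$-pure and applying Corollary \ref{34}-type reasoning), whence essentiality of $L$ over $E$ — no: $L$ essential over $E$ is the hypothesis being used, so $\varphi$ injective follows once $\varphi|_E$ is an $\mathcal{S}$-pure monomorphism. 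The clean statement is that $\mathcal{S}$-pure essential extensions compose, and I would prove this small transitivity fact as the opening move.

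**Conclusion of the argument.** Once $L$ is known to be an $\mathcal{S}$-pure essential extension of $M$, I invoke the maximality of $E$: by definition of maximal $\mathcal{S}$-pure essential extension, no proper extension of $E$ is an $\mathcal{S}$-pure essential extension of $M$. Since $L\supseteq E$ is an $\mathcal{S}$-pure essential extension of $M$, we must have $L=E$. Therefore $E$ has no proper $\mathcal{S}$-pure essential extension, and by Corollary \ref{34} (using that $\mathcal{S}$ is set-presentable, contains $R$, and consists of finitely presented modules) $E$ is $\mathcal{S}$-pure injective.

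**Where the difficulty lies.** The routine part is the appeal to Corollary \ref{34} and to maximality. The one genuinely technical point is the transitivity lemma — that a stack $M\hookrightarrow E\hookrightarrow L$ of $\mathcal{S}$-pure essential extensions is again $\mathcal{S}$-pure essential — and within that, the subtle step is checking that if $\varphi\colon L\to X$ restricts to an $\mathcal{S}$-pure monomorphism on $M$, then its restriction to $E$ is not merely injective but again an $\mathcal{S}$-pure monomorphism, so that the essentiality of $L$ over $E$ can be applied. This is handled by observing that $\varphi(M)$ is an $\mathcal{S}$-pure submodule of $X$ contained in $\varphi(E)\cong E$ (the isomorphism coming from injectivity of $\varphi|_E$), that $M\cong\varphi(M)$ is $\mathcal{S}$-pure in $E$, and then transporting $\mathcal{S}$-purity along these isomorphisms via Lemma \ref{32}(i) together with the characterization in Proposition \ref{24}; alternatively one uses that an injective map whose restriction to an $\mathcal{S}$-pure submodule is $\mathcal{S}$-pure and whose target is an essential extension is itself $\mathcal{S}$-pure — but the cleanest route is Lemma \ref{32}(ii), which already packages exactly the invariance of "$\mathcal{S}$-pure essential extension" under isomorphism that is needed here. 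I expect the whole proof to be short, perhaps a single paragraph, once the transitivity observation is made explicit.
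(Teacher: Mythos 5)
There is a genuine gap, and it sits exactly where you sensed trouble. Your reduction via Corollary \ref{34} is legitimate, but the transitivity claim --- that $M\subseteq E\subseteq L$ with $E$ an $\mathcal{S}$-pure essential extension of $M$ and $L$ an $\mathcal{S}$-pure essential extension of $E$ forces $L$ to be an $\mathcal{S}$-pure essential extension of $M$ --- is never actually established. Given $\varphi\colon L\to X$ with $\varphi|_M$ an $\mathcal{S}$-pure monomorphism, you correctly get that $\varphi|_E$ is injective; but to invoke the essentiality of $L$ over $E$ you must show that $\varphi|_E$ is an $\mathcal{S}$-\emph{pure} monomorphism, i.e.\ that $\varphi(E)$ is $\mathcal{S}$-pure in $X$. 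What you actually have is that $\varphi(M)$ is $\mathcal{S}$-pure in $X$ and (by transporting along the isomorphism $E\cong\varphi(E)$) that $\varphi(M)$ is $\mathcal{S}$-pure in $\varphi(E)$. Purity passes \emph{down} from $X$ to the intermediate module, but it does not pass \emph{up}: $A$ being $\mathcal{S}$-pure in both $B$ and $X$ with $A\subseteq B\subseteq X$ says nothing about $B$ being $\mathcal{S}$-pure in $X$. Lemma \ref{32}(ii)--(iii) only transports essentiality along isomorphisms of ambient modules and cannot supply this. Transitivity of pure essential extensions is a genuinely delicate point --- the reference [FSZ] in the bibliography is devoted precisely to it and shows it cannot be taken for granted --- so your argument cannot be repaired by a routine observation; the proof as written does not go through.

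The paper avoids transitivity altogether. It first notes (via Proposition \ref{28}, Corollary \ref{25} and Lemma \ref{21}(i)) that it suffices to show $E$ is a direct summand of every module $L$ containing it as an $\mathcal{S}$-pure submodule. Given such an $L$, Lemma \ref{33} produces $K\subseteq L$ with $K\cap M=0$ and $L/K$ an $\mathcal{S}$-pure essential extension of $(K+M)/K$. One checks $K\cap E=0$ using essentiality of $E$ over $M$, so $E\cong(K+E)/K$, and Lemma \ref{32}(iii) makes $(K+E)/K$ a \emph{maximal} $\mathcal{S}$-pure essential extension of $(K+M)/K$ sitting inside the $\mathcal{S}$-pure essential extension $L/K$; maximality forces $(K+E)/K=L/K$, hence $L=K\oplus E$. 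If you want to keep your outline, you would need to replace the transitivity step by an argument of this complement-splitting type.
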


\begin{proof} In view of Proposition  \ref{28}, Corollary \ref{25} and Lemma \ref{21} (i), it is enough to
show that $E$ is a direct summand of every $R$-module which contains $E$ as an $\mathcal{S}$-pure
submodule.

Let $E$ be an $\mathcal{S}$-pure submodule of an $R$-module $L$. Since, $L$ is also an $\mathcal{S}$-pure
extension of $M$, by Lemma \ref{33}, there exists a submodule $K$ of $L$ such that $K\cap M=0$ and that
$L/K$ is an $\mathcal{S}$-pure essential extension of $(K+M)/K.$ We show that $L\cong K\oplus E$. First,
we show that $K_{1}:=K\cap E=0.$  Let $\pi:E\to E/K_1$ denote the natural epimorphism. As $K_{1}\cap M=0$,
we see that $\pi|_M$ is an $\mathcal{S}$-pure monomorphism. Hence, $\pi$ is injective, and so $K_{1}=0.$
Now, let $f:E\to(K + E)/K$ denote the natural isomorphism. Then $f (M) = (K +M)/K.$ Thus, by Lemma \ref{32}
(iii), we have $(K+E)/K$ is a maximal $\mathcal{S}$-pure essential extension of $(K +M)/K.$ But, $L/K$ is an $\mathcal{S}$-pure essential extension of $(K + M)/K$ and $(K +E)/K\subseteq L/K.$ Thus $L=K+E,$ and so
$L=K\oplus E,$ as required.
\end{proof}

\begin{lemma}\label{37} Let $\mathcal{S}$ be a set-presentable class of finitely presented $R$-modules
containing $R$. Let $E$ be an $R$-module and $M$ a submodule of $E$. The following are equivalent:
\begin{enumerate}
\item[(i)] $E$ is a maximal $\mathcal{S}$-pure essential extension of $M$.
\item[(ii)] $E$ is an $\mathcal{S}$-pure essential extension of $M$ which is $\mathcal{S}$-pure injective.
\item[(iii)] $E$ is a minimal $\mathcal{S}$-pure injective extension of $M$.
\end{enumerate}
\end{lemma}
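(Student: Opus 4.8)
\textbf{Proof proposal for Lemma \ref{37}.}

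The plan is to prove the cycle of implications (i) $\Rightarrow$ (ii) $\Rightarrow$ (iii) $\Rightarrow$ (i), assembling the three preliminary lemmas just established. The implication (i) $\Rightarrow$ (ii) is immediate from Lemma \ref{36}: if $E$ is a maximal $\mathcal{S}$-pure essential extension of $M$, then $E$ is $\mathcal{S}$-pure injective, and of course $E$ remains an $\mathcal{S}$-pure essential extension of $M$ by hypothesis. (One should note in passing that an $\mathcal{S}$-pure essential extension of $M$ has $M$ as an $\mathcal{S}$-pure submodule, so the phrase in (ii) is consistent.)

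For (ii) $\Rightarrow$ (iii), suppose $E$ is $\mathcal{S}$-pure injective and an $\mathcal{S}$-pure essential extension of $M$; I must show no proper $\mathcal{S}$-pure injective submodule of $E$ contains $M$. So let $E'$ be an $\mathcal{S}$-pure injective submodule of $E$ with $M\subseteq E'\subseteq E$. The key point is that $E'$ being $\mathcal{S}$-pure injective and an $\mathcal{S}$-pure submodule of $E$ (here I should check that an $\mathcal{S}$-pure injective submodule is automatically an $\mathcal{S}$-pure submodule — this follows because the inclusion $E'\hookrightarrow E$, together with $\mathcal{S}$-pure injectivity of $E'$, yields a retraction, making $0\to E'\to E\to E/E'\to 0$ split, hence $\mathcal{S}$-pure) means $E'$ is a direct summand of $E$, say $E=E'\oplus C$. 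Now consider the projection-type map $\varphi\colon E\to E$ with image $E'$ and kernel $C$; its restriction to $M$ is the inclusion $M\hookrightarrow E$ (since $M\subseteq E'$), which is an $\mathcal{S}$-pure monomorphism. Because $E$ is an $\mathcal{S}$-pure essential extension of $M$, $\varphi$ must be injective, forcing $C=0$ and $E'=E$.

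For (iii) $\Rightarrow$ (i), suppose $E$ is a minimal $\mathcal{S}$-pure injective extension of $M$ (I note here that "extension" should be read as "$\mathcal{S}$-pure extension", i.e. $M$ is an $\mathcal{S}$-pure submodule — this is the reading consistent with the other two conditions). By Lemma \ref{35}, applied with $N=M$ inside the $\mathcal{S}$-pure injective module $E$, there is a submodule $E_0$ of $E$ that is a maximal $\mathcal{S}$-pure essential extension of $M$. By Lemma \ref{36}, $E_0$ is $\mathcal{S}$-pure injective, and $M\subseteq E_0\subseteq E$. By minimality of $E$ we conclude $E_0=E$, so $E$ itself is a maximal $\mathcal{S}$-pure essential extension of $M$, which is (i).

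The main obstacle I anticipate is a bookkeeping one rather than a deep one: making sure the various "extension" hypotheses are read with the implicit $\mathcal{S}$-purity condition, and verifying the small lemma that an $\mathcal{S}$-pure injective submodule of an arbitrary module is an $\mathcal{S}$-pure (equivalently, direct) summand. With that splitting fact in hand, the essential-extension definition does all the work in (ii) $\Rightarrow$ (iii), and Lemmas \ref{35} and \ref{36} do all the work in (iii) $\Rightarrow$ (i); no new constructions are needed.
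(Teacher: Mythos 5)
Your implications (i)\,$\Rightarrow$\,(ii) and (iii)\,$\Rightarrow$\,(i) are exactly the paper's arguments (Lemma \ref{36}, and Lemma \ref{35} followed by Lemma \ref{36} plus minimality, respectively), and they are correct. The problem is in (ii)\,$\Rightarrow$\,(iii). You reduce to showing that an $\mathcal{S}$-pure injective submodule $E'$ of $E$ with $M\subseteq E'$ is a direct summand, and you justify this by saying that $\mathcal{S}$-pure injectivity of $E'$ ``yields a retraction, making $0\to E'\to E\to E/E'\to 0$ split, hence $\mathcal{S}$-pure.'' This is circular: by the definition in this paper, $\mathcal{S}$-pure injectivity of $E'$ only guarantees that $\Hom_R(E,E')\to \Hom_R(E',E')$ is surjective when the sequence $0\to E'\to E\to E/E'\to 0$ is \emph{already} $\mathcal{S}$-pure exact, which is precisely what you are trying to establish. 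The auxiliary claim is in fact false in general: for $R=\mathbb{Z}$ and $\mathcal{S}$ the class of all finitely presented modules, the group of $p$-adic integers is pure injective but sits inside the field of $p$-adic numbers as a non-pure (hence non-summand) subgroup. So the splitting step genuinely fails without further input, and the extra hypotheses of (ii) do not obviously rescue your particular route.

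The paper closes this gap differently, and you already have all the tools: since $M$ is an $\mathcal{S}$-pure submodule of $E$ and $M\subseteq E'\subseteq E$, $M$ is an $\mathcal{S}$-pure submodule of $E'$ (solvability of the relevant systems of equations passes down from $E$ to $E'$ via Proposition \ref{24}(iv)); hence Lemma \ref{35} applies to the $\mathcal{S}$-pure injective module $E'$ and produces a submodule $E_2\subseteq E'$ that is a \emph{maximal} $\mathcal{S}$-pure essential extension of $M$. Since $E\supseteq E_2$ and $E$ is itself an $\mathcal{S}$-pure essential extension of $M$, the maximality of $E_2$ forces $E_2=E$, and therefore $E'=E$. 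I recommend replacing your splitting argument with this one; the rest of your proof can stand as written.
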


\begin{proof} (i)$\Rightarrow$(ii) is clear by Lemma \ref{36}.

(ii)$\Rightarrow$(iii) Suppose $E_{1}$ is a submodule of $E$ containing $M$ such that $E_{1}$ is
$\mathcal{S}$-pure injective. By Lemma \ref{35}, there exists a submodule $E_{2}$ of $E_{1}$ which
is a maximal $\mathcal{S}$-pure essential extension of $M$. Since $E$ is an $\mathcal{S}$-pure essential
extension of $M$, it turns out that $E_{2}=E$. Hence $E_{1}=E.$

(iii)$\Rightarrow$(i) By Lemma \ref{35}, there is a submodule $E_{1}$ of $E$ such that $E_{1}$ is
a maximal $\mathcal{S}$-pure essential extension of $M$. Now, Lemma \ref{36} yields that $E_{1}$ is
$\mathcal{S}$-pure injective. Thus, by the minimality assumption on $E$, we get $E_{1}=E$.
\end{proof}

Finally, we are ready to prove our main result.

\begin{theorem}\label{38} Let $\mathcal{S}$ be a set-presentable class of finitely presented $R$-modules
containing $R$. Then every $R$-module $M$ possesses an $\mathcal{S}$-pure injective envelope.
\end{theorem}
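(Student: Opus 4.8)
The plan is to show that the $\mathcal{S}$-pure injective preenvelope produced in Proposition \ref{28} can be refined to an envelope by locating, inside it, a maximal $\mathcal{S}$-pure essential extension of $M$. First I would apply Proposition \ref{28} to get an $\mathcal{S}$-pure monomorphism $\iota\colon M\hookrightarrow E_0$ with $E_0$ an $\mathcal{S}$-pure injective $R$-module; without loss of generality we may regard $M$ as an $\mathcal{S}$-pure submodule of $E_0$. By Lemma \ref{35} (applied with $N=M$), there is a submodule $E$ of $E_0$ which is a maximal $\mathcal{S}$-pure essential extension of $M$. By Lemma \ref{36}, $E$ is $\mathcal{S}$-pure injective, and by construction the inclusion $M\hookrightarrow E$ is an $\mathcal{S}$-pure monomorphism. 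So the inclusion $\phi\colon M\hookrightarrow E$ is an $\mathcal{S}$-pure injective preenvelope; indeed, given any $\mathcal{S}$-pure injective $E'$ and any $g\colon M\to E'$, the sequence $0\to M\to E\to E/M\to 0$ is $\mathcal{S}$-pure exact (as $\phi$ is an $\mathcal{S}$-pure monomorphism), so by the $\mathcal{S}$-pure injectivity of $E'$ the map $\Hom_R(E,E')\to\Hom_R(M,E')$ is surjective.

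Next I would verify the envelope condition: any $R$-homomorphism $f\colon E\to E$ with $f\phi=\phi$ is an automorphism. The point is that $f|_M=\phi$ is an $\mathcal{S}$-pure monomorphism, so since $E$ is an $\mathcal{S}$-pure essential extension of $M$ (which it is, being a \emph{maximal} such extension), the very definition of $\mathcal{S}$-pure essential extension (Definition \ref{31}(i)) forces $f$ to be injective. It then remains to see $f$ is surjective. For this I would argue that $f(E)$ is again an $\mathcal{S}$-pure essential extension of $M$ contained in $E$: since $f$ is injective, $f$ restricts to an isomorphism $E\xrightarrow{\ \sim\ }f(E)$ carrying $M$ to $M$ (as $f|_M=\phi$), so by Lemma \ref{32}(iii) the submodule $f(E)$ of $E$ is a maximal $\mathcal{S}$-pure essential extension of $M$; in particular $f(E)$ is $\mathcal{S}$-pure injective by Lemma \ref{36}. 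Now $E$ itself is an $\mathcal{S}$-pure essential extension of $M$ containing $f(E)\supseteq M$, and by Lemma \ref{37} ((i)$\Rightarrow$(iii)) $E$ is a minimal $\mathcal{S}$-pure injective extension of $M$; since $f(E)$ is an $\mathcal{S}$-pure injective submodule of $E$ containing $M$, minimality gives $f(E)=E$. Hence $f$ is surjective, and therefore an automorphism. This establishes that $\phi\colon M\to E$ is an $\mathcal{S}$-pure injective envelope.

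The delicate step — the one I expect to be the main obstacle — is the surjectivity half of the automorphism argument, specifically the claim that $f(E)$ is an \emph{$\mathcal{S}$-pure} submodule of $E$ (not merely a submodule isomorphic to $E$), which is what lets Lemma \ref{37} apply. This is where Lemma \ref{32}(i) is needed: the isomorphism $E\xrightarrow{\sim}f(E)$ sends the $\mathcal{S}$-pure submodule $M\subseteq E$ to $M\subseteq f(E)$, so $M$ is $\mathcal{S}$-pure in $f(E)$; but one wants $f(E)$ itself $\mathcal{S}$-pure in $E$. This follows because $f(E)$ is $\mathcal{S}$-pure injective (Lemma \ref{36}) and is a submodule of $E$: writing $E$ as an extension of the $\mathcal{S}$-pure injective module $f(E)$, Proposition \ref{28}/Corollary \ref{25} together with the splitting argument in the proof of Corollary \ref{34} show that an $\mathcal{S}$-pure injective submodule is in fact a direct summand once it is $\mathcal{S}$-pure, but here we get $\mathcal{S}$-purity of $f(E)$ in $E$ directly from Lemma \ref{36}'s proof technique (an $\mathcal{S}$-pure injective module is $\mathcal{S}$-pure in every extension — equivalently, splits off — by Corollary \ref{34}). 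Once this is in hand, minimality of $E$ as an $\mathcal{S}$-pure injective extension of $M$ closes the argument. Uniqueness up to isomorphism then comes for free from the general remarks on envelopes in the introduction.
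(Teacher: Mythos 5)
Your proposal is correct and follows essentially the same route as the paper: build a maximal $\mathcal{S}$-pure essential extension $E$ of $M$ inside the preenvelope from Proposition \ref{28} via Lemma \ref{35}, use $\mathcal{S}$-pure injectivity of $E'$ for the preenvelope property, and use Lemma \ref{32}(iii), Lemma \ref{36} and Lemma \ref{37} to show $f(E)=E$. Your closing worry about $f(E)$ being an $\mathcal{S}$-pure submodule of $E$ is unnecessary, since Definition \ref{31}(iii) only asks that no proper $\mathcal{S}$-pure injective \emph{submodule} of $E$ contain $M$, so $f(E)$ being $\mathcal{S}$-pure injective and containing $M$ already forces $f(E)=E$ by minimality.
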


\begin{proof} By Proposition \ref{28} and Lemma \ref{35}, there exists a maximal $\mathcal{S}$-pure essential
extension $E$ of $M$. Let $\phi:M\hookrightarrow E$ denote the inclusion $R$-homomorphism. Let $E^{\prime}$
be an $\mathcal{S}$-pure injective $R$-module and $\psi:M\to E^{\prime}$ an $R$-homomorphism. Since
$E^{\prime}$ is $\mathcal{S}$-pure injective, there exists an $R$-homomorphism $f:E\to E^{\prime}$
such that $f\phi=\psi.$ Now, suppose an $R$-homomorphism $f:E\to E$ is such that $f\phi=\phi.$ Since
$f|_M=\phi$ is an $\mathcal{S}$-pure monomorphism and $E$ is an $\mathcal{S}$-pure essential extension
of $M$, we see that $f$ is injective. By Lemma \ref{32} (iii), $f(E)$ is also a maximal $\mathcal{S}$-pure
essential extension of $M$. Hence, by Lemma \ref{36}, $f(E)$ is $\mathcal{S}$-pure injective. Now, as by
Lemma \ref{37}, $E$ is a minimal $\mathcal{S}$-pure injective extension of $M$, we deduce that $f(E)=E.$
So, $f$ is an  automorphism.
\end{proof}

\subsection*{Acknowledgements} The authors would like to express their deep thanks to Professor Daniel Simson 
for several useful suggestions, historical comments, and his interest on this work. In particular, they thank
him for pointing out that the results of the present paper are essentially related with the old papers by
Cohn \cite{C},  {\L}o\'{s} \cite{L}, and Kie{\l}pi\'nsski \cite{K}.


\end{document}